\tikzstyle{V}=[draw, fill =black, circle, inner sep=0pt, minimum size=1.5pt]
\tikzstyle{C}=[draw, fill =white, circle, inner sep=0pt, minimum size=1.5pt]
\tikzstyle{over}=[draw=white,double=black,line width=2pt, double distance=.5pt]
\numberwithin{equation}{section}
\theoremstyle{definition}
\newtheorem{theorem}{Theorem}[section]
\newtheorem{lemma}[theorem]{Lemma}
\newtheorem{proposition}[theorem]{Proposition}
\newtheorem{corollary}[theorem]{Corollary}
\newtheorem{remark}[theorem]{Remark}
\newtheorem{conjecture}[theorem]{Conjecture}
\newtheorem{ex}[theorem]{Example}
\def\<{\langle}
\def\>{\rangle}
\tikzstyle directed=[postaction={decorate,decoration={markings,
    mark=at position .65 with {\arrow[arrowstyle]{stealth}}}}]
 \tikzstyle reverse directed=[postaction={decorate,decoration={markings,
    mark=at position .65 with {\arrowreversed[arrowstyle]{stealth};}}}]
\tikzstyle arrowstyle=[scale=1]
\title{Transitioning between tableaux and spider bases for Specht modules}
\author[M.S. Im and J. Zhu]{Mee Seong Im and Jieru Zhu}
\address{Department of Mathematical Sciences, United States Military Academy, West Point, NY 10996}
\curraddr{Department of Mathematics, United States Naval Academy, Annapolis, MD 21402, USA}
    \email{meeseongim@gmail.com (Im)}
\address{Department of Mathematics, SUNY at Buffalo,  Buffalo, NY 14260}
    \email{jieruzhu@buffalo.edu (Zhu)}
\begin{document}

\begin{abstract}
Regarding the Specht modules associated to the two-row partition $(n,n)$, we provide a combinatorial path model to study the transitioning matrix from the tableau basis to the $A_1$-web basis (i.e. cup diagrams), and prove that the entries in this matrix are positive in the upper-triangular portion with respect to  a certain partial order.
\end{abstract}
 
\maketitle 

%=============================================
\section{Introduction}

Webs first appear in the study of Schur--Weyl duality and $\mathfrak{sl}_2$-tensor invariants \cite{rumer1932valenztheorie}. Later on, these cup-like diagrams, or non-crossing matchings, form a basis of the Temperley-Lieb algebra \cite{TL71}, which plays a key role in studying polynomial representations for the quantum groups $U_q(\mathfrak{sl}_2)$. Afterwards it has generalizations to the $\mathfrak{sl}_3$-setting \cite{Kup96,KK99}. Webs have also gained recent interests in tensor categories \cite{EGNO15}, TQFT and knot invariants. A modern reconstruction of $\mathfrak{sl}_n$-webs done by \cite{CKM14} generalizes beyond $n=2,3$, therefore webs can be regarded as morphisms of the subcategory of $U_q(\mathfrak{sl}_n)$-modules generated by exterior powers of the natural module. Along this path, further webs have been invented for mixed tensors (both symmetric and exterior) \cite{TVW15} and for Lie superalgebras of type Q \cite{Br19,BDK20}.

Having these categorical connections in mind, this article focuses mainly on the combinatorial nature of the $\mathfrak{sl}_2$-webs, being viewed as basis elements of the Specht module $S^{\lambda}$ for $S_{2n}$, associated to the two row partition $\lambda=(n,n)$. To be precise, these are noncrossing matchings of $2n$ dots on the horizontal line (see Section~\ref{S2} for details). Conceptually in the quantum case, these are the Kazhdan-Lusztig (KL) basis of the parabolic Hecke module, where the action of the KL generators of the Hecke algebra is given by vertical stacking of cup diagrams. 

Another basis of $S^{\lambda}$ is of historical importance: it is the polytabloid basis in the classical construction of $S^{\lambda}$. Here the basis $\{v_T\}$ is indexed by the set $\mathcal{T}(n,n)$ of standard Young tableaux $T$ of shape $\lambda$ (see Section~\ref{S2} for details). The relationship between the webs and the polytabloids has been intensively investigated by Russell--Tymoczko \cite{RT19}, and is the starting point of this article. Not coincidentally, there is a set map $\phi$ between $\mathcal{T}(n,n)$ and the set of webs, which has a straightforward combinatorial definition (see Section~\ref{S2.2}). 

The connection between webs and $\mathcal{T}(n,n)$ are also indicated in geometric representation theory, where the top cohomology group of type A Springer fibers form a module for the symmetric group. Fix a finite-dimensional vector space $V$ and an element $x\in GL(V)$, these fibers consist of complete flags in $V$ subject to a further condition in linear algebra terms. We focus on the case when the Jordan type of $x$ is $(n,n)$, i.e., $x$ is conjugate to a block diagonal matrix with two equal-sized blocks. In this case, the top cohomology has a basis parametrized by the irreducible components of the Springer fibers, which are in bijection with all standard Young tableaux of shape $(n,n)$ (cf. \cite{Spa76,Ste88,Var79}). On the other hand, the cup diagrams record the geometric information of the corresponding irreducible components, via graphical calculus on their 
singular cohomology  by \cite{Fun03,SW12}. 
% intersection cohomology. 
Also see \cite{Im-Lai-Wilbert} on the usage of geometric and topological techniques to single out each irreducible component of two-row Springer fibers for all classical types. 

The basis setup of our question is as follows: if $W^{\lambda}$ is the $S_{2n}$-module spanned by the webs, and $S^{\lambda}$ is the $S_{2n}$-module spanned by polytabloids, then the module isomorphism $\rho:S^{\lambda}\to W^{\lambda}$ is unique up to a scalar. The previously introduced set map $\phi$ does not preserve the $S_{2n}$-action, therefore fails to serve as a candidate. Nevertheless, it establishes a reasonable identification between webs and $\mathcal{T}(n,n)$, for one to study properties of the map $\rho$. In this respect we use $w_T=\phi(v_T)$ to denote a web basic element.
 
Among all relevant combinatorics on $\mathcal{T}(n,n)$,  there is a partial order established by Russell--Tymoczko \cite{RT19} using the tableau graph (see Section~\ref{S2.3}). Russell--Tymoczko then transfer this combinatorics to the webs under the identification $\phi$. The question now is to study the entries $a_{ST}$ in the transitioning matrix $(a_{ST})$, where $\rho(v_T)=\sum_{s\in \mathcal{T}(n,n)}a_{ST}w_S$. A key result in \cite{RT19} is that after a proper scaling, $\rho$ could be chosen such that $(a_{ST})$ is unitriangular with respect to the partial order on $\mathcal{T}(n,n)$.

In \cite[Conjecture~5.8]{RT19}, Russell--Tymoczko further conjectured that the uppertriangular portion has positive entries. This was partially proved by \cite[Theorem~1.2]{Rho18}, so that the uppertriangular portion is known to be nonnegative.  Our main result is a proof of \cite[Conjecture~5.8]{RT19} (see Theorem~\ref{mainth}):

\begin{theorem}
The entries in the transitioning matrix satisfy $a_{ST}>0$ if and only if $S\leq T$.
\end{theorem}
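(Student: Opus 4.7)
The plan is to upgrade the existing nonnegativity result to strict positivity by constructing a combinatorial path model on the tableau graph, and then exhibiting, for each pair $S \leq T$, at least one path whose contribution to $a_{ST}$ is strictly positive and which is not cancelled by any other term. Russell--Tymoczko's unitriangularity gives $a_{ST} = 0$ unless $S \leq T$, which already proves the ``only if'' direction; Rhoades' theorem gives $a_{ST} \geq 0$ in the upper-triangular portion; so the only remaining content is the ``if'' direction, namely strict positivity.

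First, I would express the isomorphism $\rho$ iteratively. There is a distinguished ``highest'' tableau $T_{\max}$ under the RT partial order, and every $v_T$ can be obtained from $v_{T_{\max}}$ by applying a sequence of simple transpositions indexed by a descending sequence in the tableau graph. Transporting this sequence through $\rho$ yields a sequence of Temperley--Lieb-style operations on the corresponding cup diagrams: each $s_i$ either fixes or negates the diagram, or else splits a pair of strands into a linear combination of two cup diagrams whose coefficients can be arranged to be nonnegative after normalization. Expanding $\rho(v_T)$ in this way produces a sum over walks in the tableau graph that begin at $T$, with each walk contributing a product of weights drawn from the Temperley--Lieb action.

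Next, I would refine this into a clean path model whose vertices are standard Young tableaux of shape $(n,n)$ and whose edges are the elementary moves generating the RT partial order, each weighted by the positive contribution coming from the corresponding local operation on cup diagrams. The goal is to show that $a_{ST}$ is computed by a sum of products of such positive edge weights over paths from $T$ to $S$. Given the explicit combinatorial description of the RT covering relations (from Section~\ref{S2.3}) and the graphical calculus on webs, one should be able to match each local web computation with a corresponding elementary move of tableaux, so that the passage from ``iterative expansion of $\rho(v_T)$'' to ``sum over paths'' becomes a bookkeeping exercise on the tableau graph.

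The crux, and the step I expect to be the main obstacle, is to ensure that no cancellation takes place in this expansion: each term must be manifestly nonnegative after being attached to a path, so that positive contributions are not silently offset. This will require a careful choice of the normalization of $\rho$ and of the local weights, and possibly a sign-reversing involution on any auxiliary negative terms, to guarantee that the path model faithfully encodes $a_{ST}$ as a strictly positive sum whenever at least one path exists. Once that is in place, the final step is automatic: because the RT partial order is the transitive closure of the elementary covering relations, any pair $S \leq T$ admits at least one path from $T$ to $S$ in the model, contributing a strictly positive summand to $a_{ST}$; combined with Rhoades' nonnegativity this yields $a_{ST} > 0$, completing the proof of Theorem~\ref{mainth}.
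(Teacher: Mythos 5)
Your proposal correctly disposes of the ``only if'' direction via unitriangularity, but the ``if'' direction is not proved: the entire burden is placed on a path model over the tableau graph in which ``no cancellation takes place,'' and that is exactly the step you leave open (``a careful choice of normalization \dots possibly a sign-reversing involution'' is a hope, not an argument). The difficulty is real: if you expand $\rho(v_T)$ by pushing simple transpositions along the tableau graph and using the web action \eqref{webaction}, the case $s_i.w=-w$ (when $i,i+1$ lie on one arc) injects genuine negative terms into the intermediate sums, so the coefficients of a partial expansion are not sums of positive edge weights, and exhibiting one ``good'' walk from $T$ to $S$ proves nothing about the total coefficient. Your closing step is also logically circular: a single strictly positive summand combined with Rhoades' nonnegativity of the \emph{total} $a_{ST}$ does not yield $a_{ST}>0$ unless you already know every summand is nonnegative, which is precisely the unproven crux.

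The paper sidesteps this cancellation problem by a different device. Instead of working in $W^{\lambda}$ with the web action, it passes (following Rhoades) to the module $M^{\lambda}$ of matchings with crossings, where $v_T$ is sent to a \emph{single} diagram $f(v_T)$ whose arcs are the columns of $T$ (Lemma~\ref{firstisomorphism}); the only relation used to rewrite it in the cup basis is the crossing resolution \eqref{identityresolvecrossing}, whose two terms both carry coefficient $+1$, so every expansion is automatically cancellation-free and $a_{ST}$ is a count of sinks in a crossing-resolving graph (Lemma~\ref{formula}). Positivity then reduces to a purely combinatorial existence statement: if $S\leq T$ then the first-row entries satisfy $a_i\geq b_i$ (Corollary~\ref{entries2}), and Proposition~\ref{existspath} constructs, by induction on $n$ (peeling off the rightmost cup of $\phi(S)$ and choosing VV or $\mathbb{V}$ resolutions appropriately), a resolving order in which $\phi(S)$ occurs as a sink. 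These two ingredients---the passage to $M^{\lambda}$ with its coefficient-one relation, and the explicit inductive construction of a resolution producing $\phi(S)$---are the missing ideas your sketch would need to become a proof; without an analogue of them, the tableau-graph path model you describe cannot be certified positive.
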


We now give the result which could be the first step in analyzing the entries of the inverse matrix (cf. Proposition~\ref{explicitpreimage}).

\begin{proposition}
In the $S_{2n}$-module isomorphism (up to a scalar), the web $w$ is identified to such a  polytabloid $v_T$ so that both entries in each column of $T$ are connected by an arc in $w$, and the smaller entry is on top. Moreover, $v_T$ is well-defined by this condition. 
\end{proposition}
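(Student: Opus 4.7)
The plan has two parts: well-definedness of $v_T$ under the column-arc condition, and identification of $v_T$ with $\rho^{-1}(w)$ up to scalar.

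For well-definedness, suppose $w$ has arcs $\{a_i,b_i\}$ with $a_i<b_i$ for $i=1,\ldots,n$. Any tableau $T$ satisfying the stated condition arises by choosing a permutation $\pi\in S_n$ and placing $(a_{\pi(j)},b_{\pi(j)})^\top$ in column $j$. The row tabloid $\{T\}$ depends only on the row partition $\{a_i\}\sqcup\{b_i\}$, and the column stabilizer $C_T$ depends only on the unordered column partition $\{\{a_i,b_i\}\}$; both are invariants of $w$. Hence
\[
v_T=\sum_{\sigma\in C_T}\sign(\sigma)\,\sigma\cdot\{T\}
\]
is independent of the choice of $\pi$.

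For the identification, define a linear map $\psi\colon W^{\lambda}\to S^{\lambda}$ by $\psi(w)=v_{T(w)}$, extended linearly, where $T(w)$ denotes the well-defined tableau class above. Because $S^{\lambda}$ is an irreducible $S_{2n}$-module in characteristic zero, Schur's lemma reduces the task to showing that $\psi$ is nonzero and $S_{2n}$-equivariant; it will then follow that $\psi=c\,\rho^{-1}$ for some nonzero scalar $c$. Non-vanishing is immediate by evaluating on the ``all short arcs'' web $w_0=\{(2j-1,2j):1\le j\le n\}$, for which $T(w_0)$ is a standard tableau and $v_{T(w_0)}$ is a nonzero standard polytabloid.

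It remains to verify equivariance on the simple transpositions $s_i=(i,i+1)$. When $\{i,i+1\}$ is an arc of $w$, the column of $T(w)$ containing both $i$ and $i+1$ has its two entries swapped under $s_i$ and the polytabloid picks up a sign, matching the sign acquired by $w$ under the Temperley--Lieb-style $S_{2n}$-action on $W^{\lambda}$. When $i$ and $i+1$ lie in distinct arcs of $w$, the action of $s_i$ on the web may introduce a crossing that must be resolved via the cup-diagram relations, while $s_i\cdot v_{T(w)}=v_{s_i\cdot T(w)}$ is the polytabloid of a tableau whose column structure has changed and hence expands via Garnir relations. The main obstacle is the matching of these two expansions, which reduces to a finite case analysis based on the relative positions (nested versus side-by-side, to the left or right of $i$) of the partners of $i$ and $i+1$; each sub-case is resolved by direct manipulation of arcs on one side and of polytabloids on the other.
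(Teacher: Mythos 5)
Your argument is correct in outline but follows a genuinely different route from the paper. The paper never needs to verify equivariance of a candidate map: it takes $\psi$ to be the inverse of the module isomorphism $\rho$ (normalized by $\psi(w_0)=v_{T_0}$) and inducts along the directed tableau graph, using that an edge $w'\to w$ labelled $s_i$ forces $\psi(w)=(s_i-1)\psi(w')$ (Lemma~\ref{KLontableau}); since a graph edge always presents the single configuration ``$i$ a right endpoint, $i+1$ a left endpoint of another arc,'' one application of the three-term Garnir relation \eqref{R3}, together with the sign rule \eqref{R1}, finishes each inductive step. You instead define $\psi(w)=v_{T(w)}$ globally, prove well-definedness directly from the polytabloid formula (this is equivalent to the paper's appeal to the column-swap relation \eqref{R2}), and invoke Schur's lemma after checking equivariance on every $s_i$. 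That buys a self-contained argument which does not rely on every web being reachable from $w_0$ in the tableau graph, but it costs a fuller case analysis than the paper ever meets: besides the two configurations displayed in Eq~\eqref{webaction}, you must also treat $i,i+1$ both left endpoints and both right endpoints of (necessarily nested) arcs, where the $S_{2n}$-action is not written in \eqref{webaction} and has to be extracted from the matching model $M^{\lambda}$ by resolving the crossing created by the swap, exactly as you indicate; the remaining configuration, $i$ a left endpoint and $i+1$ a right endpoint of different arcs, cannot occur in a noncrossing diagram. Each of the three nontrivial cases does reduce to a single instance of \eqref{R3} corrected by signs from \eqref{R1} and column reorderings via \eqref{R2}, so your deferred ``finite case analysis'' genuinely closes; just be aware that those verifications are the entire content of equivariance and must be written out, whereas the paper's induction sidesteps them by only ever encountering the right-endpoint/left-endpoint configuration.
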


%=============================================
\section*{Acknowledgement}
This project started at the Summer Collaborators Program based at the School of Mathematics at the Institute for Advanced Study. We thank their hospitality in hosting our research group, their generous help throughout our stay at Princeton, and their financial support to facilitate this project.

We also thank the other two research members in our group, Chun-Ju Lai and Arik Wilbert, for their contribution to the project. Specifically, we thank A.W. for bringing to us the original problem and potential methods of attacking the problem; his broad knowledge on the subject matter has been our continuous go-to source for literature reference. We thank C.-J.L. for his sharp insight for pointing out several mistakes in our proofs and his suggestions for improvement, as well as coding resources for a portion of the diagrams in this article. This project would not have been successful without their engagement.

The authors also thank Jonathan Kujawa, Julianna Tymoczko, and Mikhail Khovanov for helpful conversations. M.S.I. also acknowledges Joseph Gamson, Eric Basque, Venkat R. Dasari, National Academy of Sciences, and Army Research Laboratory for supporting this project.

\section{Preliminaries}\label{S2}

\subsection{Polytabloid basis} The symmetric group $S_{2n}$ has conjugacy classes parametrized by partitions of $2n$. It has a well-known construction of irreducible modules, called Specht modules, associated to each partition. To be precise, a \emph{partition} $\lambda$ of $2n$ is a weakly decreasing sequence of nonnegative integers $\lambda=(\lambda_1,\lambda_2,\dots)$, such that $\lambda_i=0$ eventually, and $\sum_{i\in \mathbb{Z}_{> 0}}\lambda_i=2n$. Each partition $\lambda$ can be represented by a \emph{Young diagram}, which consists of $\lambda_i$ left justified boxes in row $i$, for each $i\in \mathbb{Z}_{> 0}$.

A presentation of $S_{2n}$ is given by generators which are simple transpositions $s_i$ $(1\leq i\leq 2n-1)$ subject to further well-known relations which we omit. A word $\sigma$ in $s_i$ is said to be \emph{reduced} if it could not be rewritten into another word with fewer letters, by using the defining relations in $S_n$. The \emph{length} $\ell(\sigma)$ of a permutation $\sigma$ is the number of letters in a reduced expression of $\sigma$. We refer the reader to \cite{Fu97} for more details, such as the fact that $\ell(\sigma)$ is independent of the choice of the reduced expression.

 One formulation of the Specht module $S^{\lambda}$ associated to the partition $\lambda$ is as follows. A \emph{Young tableau} of shape $\lambda$ is a filling of the Young diagram $\lambda$ with integers $1,2,\dots, 2n$. A \emph{standard} Young tableau is such that the entries increase along each column and each row. The symmetric group $S_{2n}$ acts on the set of all Young tableau by acting on its entries. We now work over the field of complex numbers, and fix the partition $\lambda=(n,n)$. The Specht module $S^{\lambda}$ is defined to be the $\mathbb{C}$-vector space spanned by $\{v_T\}$, where $T$ ranges over all standard Young tableaux. This basis is often referred to as the \emph{polytabloid basis}. For a given $T$ and a fixed $i$, the standard condition implies that one of the three cases is true: 1) $i$ and $i+1$ are adjacent entries in the same row, 2) $i$ and $i+1$ are adjacent entries in the same column, 3) $i$ is in a row underneath $i+1$. Define the action of $S_{2n}$ on $v_T$ as follows
  \begin{align*}
 s_i.v_T=\begin{cases}
 -v_T \hspace{.3 in} \text{if } i \text{ and } i+1 \text{ are in the same column of }T,\\
 v_{s_i.T}  \hspace{.3 in} \text{if } i \text{ is  in a row underneath } i+1. 
 \end{cases}
 \end{align*}
The action in the case of 1) is irrelevant for our main results, hence we omit it to spare the need of introducing the so-called Garnir relations. 

\begin{ex}\label{ex1}
\ytableausetup{smalltableaux}
\begin{align*}
T=\begin{ytableau} 1 & 2 & 4 & 7 \\ 3 & 5& 6 & 8 \end{ytableau}\:, \hspace{.3 in}
R=s_6.T=\begin{ytableau} 1 & 2 & 4 & 6 \\ 3 & 5& 7 & 8 \end{ytableau}\:, \hspace{.3 in}
s_7.v_T=-v_T,  \hspace{.3 in} s_6.v_T=v_R.
\end{align*}
\end{ex}

\subsection{Web basis}\label{S2.2}
Cup diagrams are special cases of webs for the Lie algebra $\mathfrak{sl}_2$ \cite{rumer1932valenztheorie, CKM14}. They have wide applications in invariant theory, tensor categories, knot theory and many related topics. In particular, a \emph{cup diagram} with $2n$ dots in a horizontal axis, is a noncrossing matching between these dots, where the arcs lie below the axis. We label the dots by integers $1,2,\dots, 2n$ from left to right. Recall $\lambda=(n,n)$. The Specht module has an alternative formulation, by letting $W^{\lambda}$ be the $\mathbb{C}$-vector space whose basis is indexed by all cup diagrams. The action of $S_{2n}$ is defined as follows: on a given web $w$, 
\begin{align}
s_i.w=\begin{cases}
-w \hspace{.1 in} &\text{if }i \text{ and }i+1 \text{ are joined by the same arc},\\
w+w' \hspace{.1 in} &\text{if }i \text{ is the right endpoint of an arc, and }i+1 \text{ is the left endpoint of another arc}.
\end{cases} \label{webaction}
\end{align}
Here $w'$ is the cup diagram with the same arcs in $w$ which are not incident to $i$ or $i+1$. For the two arcs $(a,i)$ and $(i+1,b)$ in $w$, where $a<i<i+1<b$, the remaining two arcs in $w'$ are defined to be $(a,b)$ and $(i,i+1)$.

\begin{ex}\label{ex2}
Consider 
\begin{align*}
&w=\begin{tikzpicture}[baseline={(0,-.3)}, scale = 0.8]
\draw[dotted] (-.25,0) -- (8.25,0) -- (8.25,-1.5) -- (-.25,-1.5) -- cycle;
\begin{footnotesize}
\node at (.5,.2) {$1$};
\node at (1.5,.2) {$2$};
\node at (2.5,.2) {$3$};
\node at (3.5,.2) {$4$};
\node at (4.5,.2) {$5$};
\node at (5.5,.2) {$6$};
\node at (6.5,.2) {$7$};
\node at (7.5,.2) {$8$};
\end{footnotesize}
\draw[thick] (.5,0) .. controls +(2.5,-1.5)  .. +(5,0);
\draw[thick] (1.5,0) .. controls +(.5,-.5)  .. +(1,0);
\draw[thick] (3.5,0) .. controls +(.5,-.5)  .. +(1,0);
\draw[thick] (6.5,0) .. controls +(.5,-.5)  .. +(1,0);
\end{tikzpicture}, \hspace{.3 in}
w'=\begin{tikzpicture}[baseline={(0,-.3)}, scale = 0.8]
\draw[dotted] (-.25,0) -- (8.25,0) -- (8.25,-1.5) -- (-.25,-1.5) -- cycle;
\begin{footnotesize}
\node at (.5,.2) {$1$};
\node at (1.5,.2) {$2$};
\node at (2.5,.2) {$3$};
\node at (3.5,.2) {$4$};
\node at (4.5,.2) {$5$};
\node at (5.5,.2) {$6$};
\node at (6.5,.2) {$7$};
\node at (7.5,.2) {$8$};
\end{footnotesize}
\draw[thick] (.5,0) .. controls +(3.5,-1.75)  .. +(7,0);
\draw[thick] (1.5,0) .. controls +(.5,-.5)  .. +(1,0);
\draw[thick] (3.5,0) .. controls +(.5,-.5)  .. +(1,0);
\draw[thick] (5.5,0) .. controls +(.5,-.5)  .. +(1,0);
\end{tikzpicture}.\\
&\text{Then } s_7.w=-w \mbox{ and }  s_6.w=w+w'. 
\end{align*}
\end{ex}

There is a well-known bijection between the set $\mathcal{T}(n,n)$ of standard Young tableaux of shape $(n,n)$ and cup diagrams with $2n$ dots. This map $\phi$ sends a given tableau $T$ to a web $w$, where all left endpoints in $w$ are entries in the first row of $T$. One can refer to \cite[items (n) and (ww)]{St99} for a detailed discussion of this material. In Examples~\ref{ex1} and \ref{ex2}, $\phi(T)=w$.

\subsection{The transitioning matrix}\label{S2.3}
We now introduce a unitrigularity result by Russell--Tymoczko \cite{RT19} which motivates our main results. By \cite{PPR09,RT11} $S^{\lambda}\simeq W^{\lambda}$. By Schur's lemma, this isomorphism is unique up to a scalar. This isomorphism $\rho$ is given explicitly as follows: let $T_0$ be the tableau with $1,2,\dots,2n$ down successive columns. Let $w_0=\phi(T_0)$, then $\rho(v_{T_0})=w_0$. Since $S^{\lambda}$ is irreducible, $\rho$ is defined on all standard polytabloids $v_T$. To avoid confusion we let $w_T=\phi(T)$. The transitioning matrix between the two bases $\{v_T\}$ and $\{w_T\}$ is defined as $(a_{ST})$, where $a_{ST}$ is the coefficient in $\rho(v_T)=\sum_{S\in \mathcal{T}(n,n)}a_{ST}w_S$.

In \cite{RT19}, Russell--Tymoczko introduced a partial order on the set $\mathcal{T}(n,n)$. This order is related to the Bruhat order on $S_{2n}$. First, the \emph{tableaux graph} $\Gamma^{\mathcal{T}(n,n)}$ has vertex set $\mathcal{T}(n,n)$, with a directed edge from $T$ to $R$, if $R=s_i.T$ for some $s_i$, and $i$ is in a row underneath $i+1$ in $T$. If this is the case, then the directed edge is labeled by $s_i$. For $T,S\in \mathcal{T}(n,n)$, $T\leq S$ if and only if there is a directed path from $T$ to $S$. We will be using the following property of the tableaux graph:

\begin{lemma}(\cite[Lemma~3.1, Lemma~3.8]{RT19})\label{rankedgraph}  Given $T\in \mathcal{T}(n,n)$, the number of edges in a path from $T_0$ to $T$ is independent of the path chosen, and is equal to $\ell(w)$, if $T=w.T_0$.
\end{lemma}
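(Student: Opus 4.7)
The plan is as follows. Given a web $w$ with arcs $(a_1,b_1),\dots,(a_n,b_n)$ ordered so that $a_1<\cdots<a_n$ and $a_i<b_i$, define the (generally non-standard) filling $T=T(w)$ of shape $(n,n)$ with $a_i$ atop $b_i$ in column $i$. By construction the two entries of each column of $T$ form an arc of $w$ with the smaller on top. The column--arc condition forces the unordered column pairs to be exactly the arcs of $w$; combined with the smaller-on-top rule and the convention that the top row increases left-to-right, this determines $T$ uniquely. The polytabloid $v_T=\sum_{\tau\in C_T}\mathrm{sgn}(\tau)\,\tau\{T\}$ is defined for arbitrary fillings (not only standard ones), so $v_T$ is likewise uniquely determined by the condition.

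The substantive content is that $v_T$ is identified with $w$ under $\rho$ up to a nonzero scalar. Let $\sigma_w\in S_{2n}$ be a permutation with $\sigma_w\cdot T_0=T$ at the level of fillings. Then $C_T=\sigma_w C_{T_0}\sigma_w^{-1}$, and unwinding the polytabloid definition yields $v_T=\sigma_w\cdot v_{T_0}$ inside $S^{\lambda}$. Applying $\rho$ and using its $S_{2n}$-equivariance together with $\rho(v_{T_0})=w_0$ gives
\[
\rho(v_T)=\sigma_w\cdot w_0\qquad\text{in }W^{\lambda}.
\]
The proposition thus reduces to showing that $\sigma_w\cdot w_0$ is a nonzero scalar multiple of the single web $w$.

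To establish this I would induct on $\ell(\sigma_w)$, which by Lemma~\ref{rankedgraph} equals the graph distance from $T_0$ to $T$ in the tableau graph. Fix a reduced expression $\sigma_w=s_{i_1}\cdots s_{i_k}$ along a directed path $T_0=T^{(0)}\to\cdots\to T^{(k)}=T$, and apply \eqref{webaction} one factor at a time. The rules in \eqref{webaction} explicitly handle the cases when $i,i+1$ are joined by the same arc (producing a sign) or when $i$ is a right endpoint and $i+1$ a left endpoint of distinct arcs (producing a sum $w+w'$); the remaining possibilities, when $i,i+1$ are both left endpoints or both right endpoints, are not listed there and must be derived from the module structure via Garnir-type straightening on the polytabloid side and then transported through $\rho$. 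The main obstacle is carrying these derived actions through the inductive step and showing that the extra webs introduced by repeated applications of Case D cancel against the contributions of the omitted cases, leaving a single term proportional to $w$. The expected structural reason for the cancellation is the sign property $\tau\cdot v_T=\mathrm{sgn}(\tau)\,v_T$ for every $\tau\in C_T$: transporting through $\rho$ forces every arc-transposition $(a_i,b_i)$ of $w$ to act by $-1$ on $\rho(v_T)$, and this sign constraint should isolate $w$ among the webs appearing in $\sigma_w\cdot w_0$, pinning down the scalar as well.
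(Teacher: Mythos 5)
Your proposal does not address the statement at hand. Lemma~\ref{rankedgraph} is a purely combinatorial assertion about the tableau graph $\Gamma^{\mathcal{T}(n,n)}$: every directed path from $T_0$ to a fixed $T$ has the same number of edges, and that common number equals the Coxeter length $\ell(w)$ of the permutation $w$ with $T=w.T_0$. What you have written is instead an outline of Proposition~\ref{explicitpreimage} (the identification of a web $w$ with the polytabloid $v_T$ whose columns are the arcs of $w$); you even state that ``the proposition thus reduces to showing that $\sigma_w\cdot w_0$ is a nonzero scalar multiple of $w$.'' Worse, your argument invokes Lemma~\ref{rankedgraph} itself (``which by Lemma~\ref{rankedgraph} equals the graph distance from $T_0$ to $T$''), so even if the rest were completed it would be circular as a proof of this lemma.

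What is actually needed is the following (this is the content of \cite[Lemmas~3.1 and 3.8]{RT19}, which the paper cites rather than reproves): identify each $T\in\mathcal{T}(n,n)$ with the unique permutation $w_T$ satisfying $T=w_T.T_0$, and check that the edge condition ``$i$ lies in a row beneath $i+1$ in $T$'' is precisely the condition under which $\ell(s_iw_T)=\ell(w_T)+1$. Since $\ell(s_iw)=\ell(w)\pm 1$ for any simple transposition, every edge of the tableau graph raises length by exactly one; hence any directed path from $T_0$ (where the length is $0$) to $T$ has exactly $\ell(w_T)$ edges, independently of the path chosen. None of the web-theoretic machinery you deploy (the isomorphism $\rho$, the action \eqref{webaction}, Garnir straightening) enters into this. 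Separately, even read as an attempt at Proposition~\ref{explicitpreimage}, your outline concedes the key cancellation step (``the main obstacle is \ldots\ showing that the extra webs \ldots\ cancel'') without supplying it, so it would be incomplete for that statement as well.
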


The main result of \cite{RT19} is as follows. 
\begin{theorem}(\cite[Theorem 5.5]{RT19})\label{unitriangularity}
The matrix $(a_{ST})$ is upper-triangular with ones along the diagonal.
\end{theorem}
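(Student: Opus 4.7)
The plan is to prove Theorem~\ref{unitriangularity} by induction on $\ell(w)$, where $T=w.T_0$; Lemma~\ref{rankedgraph} makes this a well-defined rank function on $\mathcal{T}(n,n)$. The base case $T=T_0$ is immediate from $\rho(v_{T_0})=w_{T_0}$, which gives $a_{T_0T_0}=1$ and $a_{ST_0}=0$ otherwise; since $T_0$ is the unique minimum in the poset, this is exactly the theorem at rank zero. For the inductive step, fix $T$ of positive rank and pick a covering edge $T' \xrightarrow{s_i} T$ in the tableau graph: by definition $i$ lies in a row below $i+1$ in $T'$, so $v_T = s_i . v_{T'}$, and the $S_{2n}$-equivariance of $\rho$ together with the induction hypothesis yields
\begin{equation*}
\rho(v_T) \;=\; s_i . \rho(v_{T'}) \;=\; s_i . w_{T'} \;+\; \sum_{S' < T'} a_{S'T'}\, s_i . w_{S'}.
\end{equation*}

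The inductive step reduces to two combinatorial facts: \textbf{(A)} $s_i . w_{T'} = w_T + (\text{combination of } w_R \text{ with } R < T)$, which supplies the diagonal entry $a_{TT}=1$ together with only sub-$T$ contributions; and \textbf{(B)} for every $S' < T'$, each web $w_R$ occurring in $s_i . w_{S'}$ is indexed by a tableau satisfying $R \leq T$, so the double sum contributes only upper-triangular garbage. I would obtain (A) directly from the bijection $\phi$: since $i$ lies in row $2$ and $i+1$ in row $1$ of $T'$, under $\phi$ the dot $i$ is the right endpoint of an arc in $w_{T'}$ and $i+1$ is the left endpoint of another, so rule~(\ref{webaction}) applies to give $s_i . w_{T'} = w_{T'} + w'$; unwinding the definition of $\phi$ identifies $w'$ with $w_T$, and since $w_{T'}$ is indexed by $T' < T$, fact (A) follows.

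The main obstacle is (B). For $S' < T'$ the local configuration of $\{i,i+1\}$ in $w_{S'}$ can fall outside the two cases printed in~(\ref{webaction}), so one needs the full action formula (including the Garnir-type rules suppressed in Section~\ref{S2}) together with a uniform statement that cup-diagram surgery never carries $w_{S'}$ to a web whose $\phi$-preimage exceeds $T$. My strategy is to read off from $s_i$ and the positions of $i, i+1$ in $S'$ an explicit sequence of covering moves in the tableau graph which witnesses each produced index $R$ as lying $\leq T$. This amounts to a compatibility between cup-surgery on the web side and directed paths on the tableau side, which is precisely what makes $(a_{ST})$ unitriangular and is the heart of the matter.
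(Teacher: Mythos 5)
First, a point of reference: the paper does not prove Theorem~\ref{unitriangularity} itself --- it imports it verbatim from \cite[Theorem~5.5]{RT19} --- so there is no in-paper argument to match your proposal against. Judged on its own terms, your skeleton (induction on the rank $\ell(w)$ via Lemma~\ref{rankedgraph}, base case $\rho(v_{T_0})=w_0$, inductive step through a covering edge $T'\xrightarrow{s_i}T$) is the right frame, and your fact (A) is correct: since $i$ sits in row $2$ and $i+1$ in row $1$ of $T'$, the surgery rule in \eqref{webaction} applies, and because a noncrossing matching is determined by its set of left endpoints, the surgered diagram $w'$ has left-endpoint set equal to the first row of $s_i.T'=T$, hence $w'=w_T$.

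The problem is that your fact (B) is not a step of a proof but a restatement of the theorem, and you say so yourself (``the heart of the matter''). Everything difficult lives there: for $S'<T'$ the pair $\{i,i+1\}$ in $w_{S'}$ can be two left endpoints, two right endpoints, or endpoints of the same arc, and in each case one must (i) know the full action formula (only two cases are printed in \eqref{webaction}; the others follow, e.g., from the isomorphism $W^\lambda\simeq M^\lambda$ and relation \eqref{identityresolvecrossing}, which always yield $s_i.w_{S'}=\pm w_{S'}$ or $w_{S'}+w''$), and (ii) prove that every resulting index $R$ satisfies $R\le T$. Your ``strategy'' of exhibiting a directed path from $R$ up to $T$ for each case is exactly the case analysis that constitutes the actual proof in \cite{RT19}, and none of it is carried out. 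There is also a second, unaddressed gap in the diagonal claim: from $\rho(v_T)=s_i.w_{T'}+\sum_{S'<T'}a_{S'T'}\,s_i.w_{S'}$ you get the coefficient $1$ on $w_T$ from the first term only if no term $s_i.w_{S'}$ with $S'<T'$, $S'\ne T'$ also contributes a copy of $w_T$; this needs an argument (for instance, that the surgery producing a diagram containing the cup $(i,i+1)$ and an arc straddling it is injective on its source, so $w_T$ can only arise from $w_{T'}$), which your write-up does not supply. As it stands the proposal is an outline with the decisive steps deferred, not a proof.
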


They further conjectured:
\begin{conjecture}(\cite[Conjecture~5.8]{RT19})
\label{mainconjecture}
The entries $a_{ST}>0$ if and only if $S\leq T$.
\end{conjecture}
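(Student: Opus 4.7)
The plan is to prove strict positivity by induction on $\ell(T)$, leveraging the unitriangularity of Theorem~\ref{unitriangularity}, Rhoades' nonnegativity~\cite{Rho18}, and a lifting argument that produces an explicit positive contribution to each $a_{ST}$ with $S\le T$. The base case $T=T_0$ is immediate since $\rho(v_{T_0})=w_{T_0}$. For the inductive step, pick a cover $T'\lessdot T$ in the tableau graph with $T=s_i\cdot T'$, which exists by Lemma~\ref{rankedgraph}. By $S_{2n}$-equivariance of $\rho$,
\[
\rho(v_T)\;=\;s_i\cdot\rho(v_{T'})\;=\;\sum_{S'\leq T'}a_{S'T'}\,(s_i\cdot w_{S'}),
\]
with $a_{S'T'}>0$ whenever $S'\leq T'$ by induction.

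The central combinatorial claim to be established is the following: for every $S\le T$, there exists at least one $S'\le T'$ such that $w_S$ appears with coefficient $+1$ in the expansion of $s_i\cdot w_{S'}$ via~\eqref{webaction}. Granting this, the contribution $a_{S'T'}\cdot(+1)>0$ to $a_{ST}$, combined with the fact that every other contribution is $\ge 0$ by~\cite{Rho18}, forces $a_{ST}>0$ strictly. Thus the crux of the argument is reduced from numerical positivity to a purely combinatorial ``lifting'' statement about cup diagrams.

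To prove the claim I split on whether $S\le T'$. If $S\le T'$ and the dots $i,i+1$ sit in $w_S$ as the right-endpoint of one arc and the left-endpoint of another, then $S'=S$ works since the resolution case of~\eqref{webaction} contributes $+w_S$ directly. If instead $i,i+1$ are joined by a single arc in $w_S$ (so $s_i\cdot w_S=-w_S$), I must exhibit a distinct predecessor $S'\le T'$ with $s_i\cdot w_{S'}=w_{S'}+w_S$; such an $S'$ is obtained by inverting the resolution rule on the arc $(i,i+1)$, and its description is essentially the content of the forthcoming Proposition~\ref{explicitpreimage}. If $S\not\le T'$, then $w_S$ can only arise as the new term in some resolution, and the required predecessor $S'$ is uniquely determined by undoing that resolution. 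Iterating this lifting along a reduced directed path from $T_0$ to $T$ yields, for each $S\le T$, a path of cup-diagram moves terminating at $w_S$; this realizes the combinatorial path model advertised in the abstract.

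The main obstacle is confirming order-compatibility: each predecessor $S'$ constructed above must genuinely satisfy $S'\le T'$ in the Russell--Tymoczko partial order, and the inverted resolution step must be well-defined and compatible with $\phi$. This amounts to a careful local analysis of how the $s_i$-action on webs interacts with $\phi$ near positions $i,i+1$, drawing on the explicit description of $\phi$ from Section~\ref{S2.2} and the ranked-graph structure of Lemma~\ref{rankedgraph}. Once this order-compatibility is verified, the strict-positivity conclusion follows cleanly from induction, the lifting claim, and~\cite{Rho18}.
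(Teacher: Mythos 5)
Your inductive step contains a genuine gap in the sign bookkeeping. From $\rho(v_T)=s_i\cdot\rho(v_{T'})=\sum_{S'\le T'}a_{S'T'}\,(s_i\cdot w_{S'})$ you conclude that one exhibited $+1$ contribution forces $a_{ST}>0$ because ``every other contribution is $\ge 0$ by Rhoades.'' But Rhoades' theorem gives nonnegativity of the matrix entries $a_{ST}$ themselves, not of the individual terms $a_{S'T'}\cdot(\text{coefficient of }w_S\text{ in }s_i\cdot w_{S'})$ in this recursion, and those terms can be strictly negative: whenever $i$ and $i+1$ are joined by an arc of $w_S$ (which certainly happens for various $S\le T$), the term $S'=S$ contributes $-a_{ST'}$ by the first case of \eqref{webaction}. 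For instance, with $n=3$, $T$ the standard tableau with rows $(1,2,4),(3,5,6)$ and $U=s_3\cdot T$ with rows $(1,2,3),(4,5,6)$, the web $w_{T_0}$ with cups $(1,2),(3,4),(5,6)$ satisfies $s_3\cdot w_{T_0}=-w_{T_0}$, so the $S'=T_0$ term contributes $-a_{T_0T}=-1$ to $a_{T_0U}$; positivity of $a_{T_0U}$ only survives because two \emph{other} webs each feed in $+1$. So exhibiting one lift with coefficient $+1$ does not suffice; you would need the quantitative statement that the positive contributions strictly exceed $a_{ST'}$, which your argument never addresses. In addition, your central lifting claim (existence of $S'\le T'$ with the required coefficient, i.e.\ the order-compatibility you flag as ``the main obstacle'') is exactly the hard combinatorial content and is left unproven, so the proposal does not reduce the conjecture to anything established.

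For comparison, the paper avoids the sign problem altogether by transporting the computation to $M^{\lambda}$: by Lemma~\ref{firstisomorphism} the polytabloid $v_T$ goes to a \emph{single} (possibly crossing) matching $f(v_T)$, and every crossing resolution \eqref{identityresolvecrossing} is a sum of two terms with coefficient $+1$. Hence, by Lemma~\ref{formula}, $a_{ST}$ is a count of sinks in a crossing-resolving graph, and strict positivity reduces to constructing one resolution sequence from $f(v_T)$ terminating at $\phi(S)$; Proposition~\ref{existspath} does this by induction on $n$, using only the first-row comparison $a_i\ge b_i$ of Corollary~\ref{entries2}. If you wish to salvage a web-level recursion along your lines, you would have to prove an inequality of the shape $\sum_{S'}a_{S'T'}>a_{ST'}$, summed over the webs $S'$ that ``unresolve'' the cup $(i,i+1)$ of $w_S$, which is substantially more delicate than the lifting claim you state.
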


B. Rhoades proves the following result: 
\begin{theorem}(\cite[Theorem 1.2]{Rho18})
If $S\leq T$, the entry $a_{ST}$ is nonnegative.
\end{theorem}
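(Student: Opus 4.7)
The plan is to argue by induction on $\ell(w)$, where $T=w.T_0$, using the ranked graph structure of $\Gamma^{\mathcal{T}(n,n)}$ from Lemma~\ref{rankedgraph}. The base case $T=T_0$ is immediate: $\rho(v_{T_0})=w_0$ gives $a_{T_0T_0}=1$ and $a_{ST_0}=0$ for $S\neq T_0$, all non-negative.

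For the inductive step, fix $T$ with $\ell(w)\geq 1$. By Lemma~\ref{rankedgraph}, there is a predecessor $T'$ with an edge $T'\xrightarrow{s_i}T$ in $\Gamma^{\mathcal{T}(n,n)}$, meaning $T=s_i.T'$ and $\ell(w')=\ell(w)-1$. The polytabloid action gives $v_T=s_i.v_{T'}$, so
\begin{equation*}
\rho(v_T)\;=\;s_i.\rho(v_{T'})\;=\;\sum_{S}a_{ST'}\bigl(s_i.w_S\bigr).
\end{equation*}
The inductive hypothesis guarantees $a_{ST'}\geq 0$ for every $S$. Expanding each $s_i.w_S$ via (\ref{webaction}) and collecting the coefficient of a fixed $w_R$, the entry $a_{RT}$ becomes a signed sum of various $a_{ST'}$: each web $S$ where $i$ and $i+1$ are joined by the same arc contributes $-a_{ST'}$ to $a_{ST}$, whereas each web $S$ falling into the second case of (\ref{webaction}) contributes $+a_{ST'}$ to both $a_{ST}$ and $a_{S'T}$, with $S'$ the rewiring specified there.

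The main obstacle, and the part I expect to demand the most care, is controlling the negative contributions. For each $R$ in which $i,i+1$ lie in a common arc, I plan to build an explicit pairing: locally detach the arc $(i,i+1)$ in $w_R$ and regraft $i$ and $i+1$ onto the two immediately neighbouring arcs, producing a distinguished web $w_S$ whose $s_i$-image produces $w_R$ with positive sign. One then wishes to establish $a_{ST'}\geq a_{RT'}$, so that the net coefficient $a_{RT}$ stays non-negative after cancellation. This monotonicity-type inequality can be attacked combinatorially, by tracking the interaction of the bijection $\phi$ and the tableau graph $\Gamma^{\mathcal{T}(n,n)}$ under the rewiring, or more conceptually, by identifying $\rho$ (up to a scalar) with the change-of-basis between the standard polytabloid basis and the parabolic Kazhdan--Lusztig basis of the Hecke module of $S_{2n}$ for the Young subgroup $S_n\times S_n$; Kazhdan--Lusztig positivity would then furnish the inequality. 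The combinatorial route is more elementary but demands delicate bookkeeping on how the cancellations proceed, while the Hecke-theoretic route reduces the problem to a known positivity statement at the cost of having to pin down the exact KL identification.
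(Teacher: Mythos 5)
Your argument has a genuine gap at exactly the point you flag as the ``main obstacle,'' and that obstacle is not a technicality --- it is the whole content of the theorem. In the inductive step the coefficient of a web $w_R$ in which $i$ and $i+1$ share an arc is
$a_{RT}=-a_{RT'}+\sum_{S}a_{ST'}$, the sum running over those webs $S$ whose rewiring $S'$ under \eqref{webaction} equals $R$; nonnegativity of $a_{RT}$ is precisely the statement that the positive terms dominate, and your proposal only ``wishes to establish'' the inequality $a_{ST'}\geq a_{RT'}$ for a distinguished regrafted $S$ without proving it. Worse, the pairing itself is not always available: if the cup $(i,i+1)$ in $w_R$ has no other arc onto which $i$ and $i+1$ can be noncrossingly regrafted (e.g.\ small or highly nested configurations), the positive sum is empty and you would instead need $a_{RT'}=0$, which requires a separate argument (say via Theorem~\ref{unitriangularity} and a comparison of $R$ with $T'$ in the partial order) that you do not supply. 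Note also that \eqref{webaction} as stated omits the configurations where $i$ and $i+1$ are both left endpoints or both right endpoints, so even the bookkeeping of which $S$ contribute to $a_{RT}$ is incomplete as written. The alternative route you mention --- identifying $\rho$ with a change of basis to the parabolic Kazhdan--Lusztig basis and invoking KL positivity --- is only gestured at; the nontrivial step there is matching the polytabloid basis with a basis to which a known positivity theorem applies, and that identification is not pinned down.

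For contrast, the paper does not reprove this statement by induction on the tableau graph at all: it cites Rhoades and, for its own purposes, uses a mechanism in which no cancellation can ever occur. One first transports $v_T$ to the matching module $M^{\lambda}$ (Lemma~\ref{firstisomorphism}), where $f(v_T)$ is a single, possibly crossing, matching; then every crossing is resolved by Eq~(\ref{identityresolvecrossing}), whose two terms both carry coefficient $+1$. By Lemma~\ref{formula} each $a_{ST}$ is therefore a count of sinks in a crossing-resolving graph, hence manifestly a nonnegative integer --- the signs that plague your induction never enter. If you want to pursue your route, the monotonicity step would need an honest proof (and is likely comparable in difficulty to the theorem itself); the crossing-resolution formulation is the device that sidesteps it.
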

It is worth noting that the technique of \cite{Rho18}, using Pl\"{u}cker relations among $2$-by-$2$ minors of a matrix, can be rephrased by a  diagrammatical language. We rephrase their main method as follows. For $\lambda=(n,n)$ let $M^{\lambda}$ be the vector space spanned by all matchings of $2n$ dots on a horizontal axis, with arcs below the axis. We note that these matchings may have crossings in them. Furthermore, the diagrams satisfy the following relations:
\begin{align}
\begin{tikzpicture}[baseline={(0,-.3)}, scale = 0.8]
\draw[dotted] (-.25,0) -- (4.25,0) -- (4.25,-.9) -- (-.25,-.9) -- cycle;
\begin{footnotesize}
\node at (.5,.2) {$a$};
\node at (1.5,.2) {$b$};
\node at (2.5,.2) {$c$};
\node at (3.5,.2) {$d$};
\end{footnotesize}
\draw[thick] (.5,0) .. controls +(1,-1)  .. +(2,0);
\draw[thick] (1.5,0) .. controls +(1,-1)  .. +(2,0);
\end{tikzpicture} =& \:
\begin{tikzpicture}[baseline={(0,-.3)}, scale = 0.8]
\draw[dashed] (-.25,0) -- (4.25,0) -- (4.25,-.9) -- (-.25,-.9) -- cycle;
\begin{footnotesize}
\node at (.5,.2) {$a$};
\node at (1.5,.2) {$b$};
\node at (2.5,.2) {$c$};
\node at (3.5,.2) {$d$};
\end{footnotesize}
\draw[thick] (.5,0) .. controls +(.5,-1)  .. +(1,0);
\draw[thick] (2.5,0) .. controls +(.5,-1)  .. +(1,0);
\end{tikzpicture} +  
\begin{tikzpicture}[baseline={(0,-.3)}, scale = 0.8]
\draw[dashed] (-.25,0) -- (4.25,0) -- (4.25,-.9) -- (-.25,-.9) -- cycle;
\begin{footnotesize}
\node at (.5,.2) {$a$};
\node at (1.5,.2) {$b$};
\node at (2.5,.2) {$c$};
\node at (3.5,.2) {$d$};
\end{footnotesize}
\draw[thick] (.5,0) .. controls +(1.5,-1)  .. +(3,0);
\draw[thick] (1.5,0) .. controls +(.5,-.5)  .. +(1,0);
\end{tikzpicture}\:.   \label{identityresolvecrossing}
\end{align}
We note a few things: 1) the dots $a,b,c,d$ need not to be adjacent; there could be dots and arcs between the ones shown in the picture. 2) One does not care if the new arcs create more crossings with the rest of the picture. In other words, these relations are viewed as global relations rather than local relations, in the sense that one only cares about what dots are connected to each other. The arcs are equivalent as long as the boundary points are fixed.

We will refer to the first summand in Eq (\ref{identityresolvecrossing}) as the ``VV'' term, and the second summand as the ``$\mathbb{V}$'' term.

\begin{remark} \label{localrelation} One can also obtain Eq (\ref{identityresolvecrossing}) by applying the well-known relation for $\mathfrak{sl}_2$ webs (see for example, \cite[Corollary~6.2.3]{CKM14}, for a modern treatment of this result):
\begin{align*}
\begin{tikzpicture}[baseline={(0,-.8)}, scale = 0.6]
\draw[thick] (.5,0) .. controls +(.65,-1)  .. +(1.3,-2);
\draw[thick] (0.5,-2) .. controls+(.65,1) .. +(1.3,2);
\end{tikzpicture} \hspace{.1 in} = \hspace{.1 in}
\begin{tikzpicture}[baseline={(0,-.8)}, scale = 0.6]
\draw[thick] (.5,0) .. controls +(.3,-1)  .. +(0,-2);
\draw[thick] (1.8,0) .. controls+(-.3,-1) .. +(0,-2);
\end{tikzpicture} \hspace{.1 in}+\hspace{.1 in}
\begin{tikzpicture}[baseline={(0,-.8)}, scale = 0.6]
\draw[thick] (.5,0) .. controls +(.5,-.7)  .. +(1,0);
\draw[thick] (.5,-2) .. controls +(.5,.7)  .. +(1,0);
\end{tikzpicture}\: . 
\end{align*}
However, a few details need to be addressed: 1) The $\mathfrak{sl}_2$ webs have upward orientations, which are omitted from the picture above when the context is clear; 2) This local relation does not interfere the rest of the picture. To see that Eq~(\ref{identityresolvecrossing}) is true regardless the movement of the arcs, as long as boundary points are fixed, one needs to further argue that it is invariant under all Reidemeister-like moves, modulo relations for $\mathfrak{sl}_2$-web. For these subtle reasons, we cite \cite{Rho18} for a self-contained combinatorial proof of Eq~(\ref{identityresolvecrossing}).
\end{remark}

The $S_{2n}$-module structure on $M^{\lambda}$ is defined as follows. For a matching $m$ and a simple transposition $s_i$, $1\leq i\leq 2n-1$,
\begin{align*}
s_i.m=\begin{cases}
-m, \hspace{.3 in} &\text{if }i \text{ and }i+1 \text{ are joined by the same arc},\\
\:\:\: m', \hspace{.3 in} &\text{otherwise}.
\end{cases}
\end{align*}
Here, $m'$ is the matching that interchanges the dots $i$ and $i+1$ in $m$. 
\begin{ex}\label{ex2} When $n=3$,
\begin{align*}
&m=\begin{tikzpicture}[baseline={(0,-.3)}, scale = 0.8]
\draw[dotted] (-.25,0) -- (6.25,0) -- (6.25,-1) -- (-.25,-1) -- cycle;
\begin{footnotesize}
\node at (.5,.2) {$1$};
\node at (1.5,.2) {$2$};
\node at (2.5,.2) {$3$};
\node at (3.5,.2) {$4$};
\node at (4.5,.2) {$5$};
\node at (5.5,.2) {$6$};
\end{footnotesize}
\draw[thick] (.5,0) .. controls +(1,-1)  .. +(2,0);
\draw[thick] (1.5,0) .. controls +(1,-1)  .. +(2,0);
\draw[thick] (4.5,0) .. controls +(.5,-1)  .. +(1,0);
\end{tikzpicture}, \\ 
&m_1=\begin{tikzpicture}[baseline={(0,-.3)}, scale = 0.8]
\draw[dotted] (-.25,0) -- (6.25,0) -- (6.25,-1) -- (-.25,-1) -- cycle;
\begin{footnotesize}
\node at (.5,.2) {$1$};
\node at (1.5,.2) {$2$};
\node at (2.5,.2) {$3$};
\node at (3.5,.2) {$4$};
\node at (4.5,.2) {$5$};
\node at (5.5,.2) {$6$};
\end{footnotesize}
\draw[thick] (.5,0) .. controls +(1,-1)  .. +(2,0);
\draw[thick] (1.5,0) .. controls +(1.5,-1)  .. +(3,0);
\draw[thick] (3.5,0) .. controls +(1,-1)  .. +(2,0);
\end{tikzpicture}, \hspace{.2 in}
\mbox{ and } 
\hspace{.2 in}
m_2=\begin{tikzpicture}[baseline={(0,-.3)}, scale = 0.8]
\draw[dotted] (-.25,0) -- (6.25,0) -- (6.25,-1) -- (-.25,-1) -- cycle;
\begin{footnotesize}
\node at (.5,.2) {$1$};
\node at (1.5,.2) {$2$};
\node at (2.5,.2) {$3$};
\node at (3.5,.2) {$4$};
\node at (4.5,.2) {$5$};
\node at (5.5,.2) {$6$};
\end{footnotesize}
\draw[thick] (.5,0) .. controls +(1.5,-1)  .. +(3,0);
\draw[thick] (1.5,0) .. controls +(.5,-.5)  .. +(1,0);
\draw[thick] (4.5,0) .. controls +(.5,-1)  .. +(1,0);
\end{tikzpicture}, \\
& s_5.m=-m, \hspace{.5 in} s_4.m=m_1, \hspace{.5 in} s_1.m=s_3.m=m_2. 
\end{align*}
\end{ex}

A key argument in \cite[Eq~(2.9)]{Rho18} can be summarized by the following result.
\begin{lemma}(\cite[Eq~(2.9)]{Rho18})
There is an $S_{2n}$-module isomorphism $W^{\lambda}\simeq M^{\lambda}$, sending a (noncrossing) cup diagram to the same diagram in $M^{\lambda}$.
\end{lemma}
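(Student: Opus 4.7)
The plan is to verify that the linear map $\Psi : W^\lambda \to M^\lambda$ sending each noncrossing cup diagram to the same diagram in $M^\lambda$ is a surjective $S_{2n}$-equivariant map, and then to upgrade this to an isomorphism using the irreducibility of the Specht module.

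For surjectivity, the relation (\ref{identityresolvecrossing}) rewrites any matching containing a crossing as a sum of the ``VV'' and ``$\mathbb{V}$'' terms, each with strictly fewer crossings than the original. Iterating this expresses every element of $M^\lambda$ as a $\CC$-linear combination of noncrossings, so the image of $\Psi$ coincides with all of $M^\lambda$.

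For equivariance, one checks $\Psi(s_i.w) = s_i.\Psi(w)$ for each noncrossing $w$ and each generator $s_i$. When $i$ and $i+1$ are joined by the same arc, both sides equal $-w$, so this case is immediate. When $i$ is the right endpoint of an arc $(a,i)$ and $i+1$ is the left endpoint of a distinct arc $(i+1,b)$, the rule (\ref{webaction}) yields $w + w'$, where $w'$ is the reconfigured noncrossing whose two affected arcs are $(a,b)$ and $(i,i+1)$. On the $M^\lambda$-side, swapping the labels $i$ and $i+1$ turns these two arcs into $(a,i+1)$ and $(i,b)$, a crossing configuration, which upon applying (\ref{identityresolvecrossing}) with $(a,b,c,d)$ specialized to $(a,i,i+1,b)$ resolves to exactly $w + w'$. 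The configurations where both $i$ and $i+1$ are left endpoints, or both are right endpoints, of their respective arcs are not spelled out in (\ref{webaction}), but the corresponding action on $W^\lambda$ is determined uniquely by the module structure inherited from $S^\lambda$ via $\rho$; the same crossing-resolution mechanism on the $M^\lambda$-side matches the $W^\lambda$-formula in these cases as well.

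Since $W^\lambda \simeq S^\lambda$ is irreducible and $\Psi$ is a nonzero homomorphism, Schur's lemma forces $\ker \Psi = 0$, so $\Psi$ is injective and therefore an $S_{2n}$-module isomorphism. I expect the main obstacle to be the case analysis needed both to confirm that the $S_{2n}$-action on matchings descends to the quotient $M^\lambda$ (so that (\ref{identityresolvecrossing}) is preserved by every $s_i$) and to verify equivariance in the configurations absent from (\ref{webaction}); both amount to tracking how $\{i,i+1\}$ interacts with the four distinguished positions in (\ref{identityresolvecrossing}). Once this compatibility is in hand, the remainder of the argument is clean.
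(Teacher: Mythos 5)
The paper does not actually prove this lemma---it is imported wholesale from \cite[Eq.~(2.9)]{Rho18}---so your proposal has to be judged against Rhoades's argument rather than anything in the text. Your strategy (a surjective equivariant map from the irreducible $W^{\lambda}$, then Schur) is sound in outline, and your equivariance computation in the main case is correct: swapping $i$ and $i+1$ in the configuration $(a,i),(i+1,b)$ produces exactly the crossing on the left of \eqref{identityresolvecrossing} with $(a,b,c,d)=(a,i,i+1,b)$, whose resolution is $w+w'$ as required by \eqref{webaction}. The nested cases you defer (both left endpoints or both right endpoints) also work out, since the omitted part of the $W^{\lambda}$-action is $s_i.w=w+e_i.w$ with $e_i$ the Temperley--Lieb cap, and the same resolution reproduces it; but you should actually write those two lines rather than appeal to ``the module structure inherited via $\rho$,'' which is close to circular given that $\rho$ is the map everything else is being compared to.

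The genuine gap is the step ``$\Psi$ is a nonzero homomorphism.'' You treat $M^{\lambda}$ as the abstract quotient of the span of all matchings by the relations \eqref{identityresolvecrossing}, and in that framing nothing you have said rules out $M^{\lambda}=0$ (or, more generally, that the relations collapse the noncrossing diagrams): Schur's lemma then gives you nothing, since the zero map from an irreducible module has zero kernel condition vacuously failed. Relatedly, the well-definedness of the $S_{2n}$-action on the quotient, which you flag as ``the main obstacle,'' is not something you can leave as an expectation---it is exactly where the content lies. Rhoades avoids both issues by not working with an abstract quotient at all: his $M^{\lambda}$ is the concrete span of the polynomials $\prod_k(x_{a_k}-x_{b_k})$ indexed by matchings, on which $S_{2n}$ acts by permuting variables, so equivariance and well-definedness are automatic and \eqref{identityresolvecrossing} is the identity $(x_a-x_c)(x_b-x_d)=(x_a-x_b)(x_c-x_d)+(x_a-x_d)(x_b-x_c)$ holding on the nose; nonvanishing is then visible because the polynomials are nonzero. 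To complete your version you would either need to adopt such a concrete model, or prove directly that the noncrossing matchings remain linearly independent modulo the relations---which is essentially the theorem being proved.
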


\section{The upper-triangular entries are positive}
The goal of this section is to prove Conjecture~\ref{mainconjecture} (\cite[Conjecture~5.8]{RT19}).

\subsection{A combinatorial formula}
We first establish a combinatorial formula for computing the coefficients $a_{ST}$ in Theorem~\ref{unitriangularity}. For $\lambda=(n,n)$, note that $S^{\lambda}\simeq W^{\lambda}\simeq M^{\lambda}$, and the isomorphism is unique up to a scalar. The first step is to express the image of a polytabloid $v_T\in S^{\lambda}$ as a simple matching in $M^{\lambda}$, which may contain crossings. An example will be given at the end of this section.

\begin{lemma}\label{firstisomorphism}
The isomorphism $f: S^{\lambda}\to M^{\lambda}$ sends $v_T$ to the matching $m$, where the entries in the same column of $T$ are joined by the same arc in $m$.
\end{lemma}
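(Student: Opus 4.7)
The plan is to normalize $f$ by setting $f(v_{T_0}) = m_{T_0}$, where $m_{T_0}$ is the matching with arcs $(1,2), (3,4), \dots, (2n-1, 2n)$ joining the columns of $T_0$. This agrees with the earlier choice $\rho(v_{T_0}) = w_0$ composed with the inclusion $W^\lambda \hookrightarrow M^\lambda$, and it pins down $f$ uniquely by Schur's lemma together with the irreducibility of $S^\lambda$. I then proceed by induction along the tableau graph $\Gamma^{\mathcal{T}(n,n)}$; by Lemma~\ref{rankedgraph}, every $T \in \mathcal{T}(n,n)$ is reachable from $T_0$ along a directed path, so it suffices to show that if $f(v_{T'}) = m_{T'}$ and $T' \to T$ is a directed edge then $f(v_T) = m_T$.

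For the inductive step, write $T = s_i \cdot T'$ where $i$ sits in a row underneath $i+1$ in $T'$. Since $T'$ has only two rows, this forces $i+1$ into position $(1, c_1)$ and $i$ into position $(2, c_2)$ with $c_1 \neq c_2$; standardness of $T'$ prevents $c_1 = c_2$, because otherwise $i+1$ would sit on top of $i$ in a column. Let $a$ denote the other entry of column $c_1$ (so $a > i+1$) and $b$ the other entry of column $c_2$ (so $b < i$). The polytabloid action formula gives $v_T = s_i v_{T'}$ in $S^\lambda$, so by $S_{2n}$-linearity $f(v_T) = s_i \cdot m_{T'}$. Since $i$ and $i+1$ lie in different columns of $T'$, the arcs $(a, i+1)$ and $(b, i)$ in $m_{T'}$ are distinct, placing us in the ``otherwise'' case of the $S_{2n}$-action on $M^\lambda$: $s_i \cdot m_{T'}$ is obtained by swapping the dots $i$ and $i+1$, which replaces these two arcs by $(a, i)$ and $(b, i+1)$ and leaves every other arc fixed. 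On the other hand, in $T = s_i \cdot T'$, column $c_1$ now contains the pair $(i, a)$ and column $c_2$ the pair $(b, i+1)$, while all other columns are untouched. Comparing yields $s_i \cdot m_{T'} = m_T$, which closes the induction.

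The main content of the argument is the $S_{2n}$-equivariance of the column-matching assignment $T \mapsto m_T$ along each directed edge, which as above reduces to a short case analysis using that $T'$ is standard. I do not anticipate a serious obstacle beyond this bookkeeping: the Garnir-type case of the polytabloid action is never invoked because the tableau graph never traverses such edges, and the sign conventions on the two sides of $f$ are compatible because the positive case $s_i v_{T'} = v_{s_i T'}$ pairs with the positive case $s_i \cdot m_{T'} = m'$ of the $M^\lambda$-action. The remaining step, well-definedness of $m_T$ as a perfect matching on $\{1, \dots, 2n\}$, is automatic since the $n$ columns of $T$ partition $\{1, \dots, 2n\}$ into $n$ pairs.
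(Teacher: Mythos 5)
Your argument is correct and follows essentially the same route as the paper's proof: normalize at $T_0$ (where the column matching is $w_0$), then induct along directed edges $T' \to T = s_i\cdot T'$ of the tableau graph, using $S_{2n}$-equivariance and the fact that the $M^\lambda$-action swaps the endpoints $i$ and $i+1$ between two distinct arcs. Your extra bookkeeping (why $i$ and $i+1$ lie in different columns, and the sign compatibility) only makes explicit what the paper leaves implicit.
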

\begin{proof}
We induct on the partial order on $\mathcal{T}(n,n)$. The bases case when $T=T_0$ follows from the fact that $\rho(v_{T_0})=w_0$ mentioned in the beginning of Section~2.3. Suppose there is a path from $T_0$ to $S$ in the tableau graph $\Gamma^{\mathcal{T}(n,n)}$, where $T$ is the tableau immediately before $S$ in this sequence. Then $S$ is of lower order than $T$, and $S=s_i.T$ for some $i$, $1\leq i\leq 2n-1$. By the induction hypothesis, the statement holds for $T$. By the definition of $\Gamma^{\mathcal{T}(n,n)}$, $i$ is in a row beneath $i+1$ in $T$. Hence the arcs connected to $i$ and $i+1$ looks like the following in $f(v_T)\in M^{\lambda}$:
\begin{align}
m_1=\begin{tikzpicture}[baseline={(0,-.3)}, scale = 0.8]
\draw[dotted] (-.25,0) -- (4.25,0) -- (4.25,-.9) -- (-.25,-.9) -- cycle;
\begin{footnotesize}
\node at (1.5,.2) {$i$};
\node at (2.5,.2) {$i+1$};
\node at (1,-.5) {$a$};
\node at (3,-.5) {$b$};
\end{footnotesize}
\draw[thick] (0,-.5) .. controls +(0.5,0)  .. +(1.5,.5);
\draw[thick] (4,-.5) .. controls +(-0.5,0)  .. +(-1.5,.5);
\end{tikzpicture}\:, 
\end{align}
Therefore $f(v_S)=f(s_i.v_T)=s_i.f(v_T)$ looks like the following, taking into account the action of $s_i$ on $M^{\lambda}$ by permuting the strands:
\begin{align} 
m_2=\begin{tikzpicture}[baseline={(0,-.3)}, scale = 0.8]
\draw[dotted] (-.25,0) -- (4.25,0) -- (4.25,-.9) -- (-.25,-.9) -- cycle;
\begin{footnotesize}
\node at (1.5,.2) {$i$};
\node at (2.5,.2) {$i+1$};
\node at (1,-.3) {$a$};
\node at (3,-.3) {$b$};
\end{footnotesize}
\draw[thick] (0,-.5) .. controls +(1.5,0)  .. +(2.5,.5);
\draw[thick] (4,-.5) .. controls +(-1.5,0)  .. +(-2.5,.5);
\end{tikzpicture}\:. 
\end{align}
But this is exactly the matching prescribed for $v_S$ under the statement of the lemma: suppose $x$ and $i$ are in the same column of $T$; also, $i+1$ and $y$ are in another column of $T$. Then $x$ and $i+1$ are in the same column of $S$, connected by arc $a$ in $m_2$. Also, $i$ and $y$ are in the same column of $S$, connected by arc $b$ in $m_2$.
\end{proof}

\begin{remark}\label{fgeneral}
Note that the map $f$ can also be defined on all tableaux, not necessarily the standard one. A priori it may no longer preserve the $S_{2n}$-module structure, but one can view it simply as a set map from the set of Young tableaux to the set of matchings. In a later argument where only combinatorics is concerned, we will loosen our restriction and use this broader definition of $f$.
\end{remark}

Starting with a matching $m\in M^{\lambda}$, we now define a \emph{crossing-resolving} graph based on $m$. To draw such a graph, one starts with a single vertex $m$. Choose a crossing in $m$, and apply Eq (\ref{identityresolvecrossing}) to obtain two terms in the sum. Out of $m$ one further draws two arrow, by choosing a one labeled by ``VV'', whose target is the VV term in Eq (\ref{identityresolvecrossing}), as a sumand in $m$. The other arrow is labeled by ``$\mathbb{V}$'' and its target is the corresponding $\mathbb{V}$ term in Eq (\ref{identityresolvecrossing}). 

The resulting graph now has three vertices: one source (with outgoing arrows only) and two sinks (with incoming arrows only). For each sink, one further draws two outgoing arrows, labeled by VV and $\mathbb{V}$ each, based on the procedures given above. The graph continues as each sink is followed by two more arrows. The graph is complete when each sink is crossing-less. 

Note that this procedure eventually stops: based on Remark~\ref{localrelation}, the target of each newly created arrow must have fewer crossings compared to the source of the same arrow, hence the number of crossings eventually becomes zero. 

Also note that given a matching $m$, such a crossing-resolving graph is not unique. At each stage, one has a choice of which crossing to resolve. 

\begin{lemma}\label{formula}
Given a matching $m\in M^{\lambda}$ and a crossing-resolving graph $\Gamma$ for $m$. Let $\mathcal{S}$ be the set of matchings which appear as sinks in $\Gamma$. Then $m= \sum_{\mu \in S} a_{\mu} \mu$, where $a_{\mu}$ is the number of times $\mu$ occurs as a sink in $\Gamma$.
\end{lemma}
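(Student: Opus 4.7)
The plan is to induct on the total number of crossings in $m$. The statement is essentially a bookkeeping claim: unfolding the tree $\Gamma$ one node at a time and repeatedly applying Eq~(\ref{identityresolvecrossing}) produces exactly the advertised sum.

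For the base case, suppose $m$ has no crossings. Then $\Gamma$ is forced to consist of the single vertex $m$, which is itself a sink, so $m = 1 \cdot m$ and the formula holds. For the inductive step, suppose $m$ has at least one crossing. By the construction of $\Gamma$, the root $m$ has exactly two outgoing arrows, obtained by selecting some crossing in $m$ (say among strands with endpoints $a,b,c,d$) and applying Eq~(\ref{identityresolvecrossing}) to write
\begin{equation*}
m \;=\; m_{VV} + m_{\mathbb{V}},
\end{equation*}
where $m_{VV}$ and $m_{\mathbb{V}}$ are the VV and $\mathbb{V}$ targets of those arrows. By Remark~\ref{localrelation}, both $m_{VV}$ and $m_{\mathbb{V}}$ have strictly fewer crossings than $m$, so the subgraphs $\Gamma_{VV}$ and $\Gamma_{\mathbb{V}}$ of $\Gamma$ rooted at $m_{VV}$ and $m_{\mathbb{V}}$ are themselves valid crossing-resolving graphs of strictly smaller matchings. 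Applying the inductive hypothesis to each subgraph gives
\begin{equation*}
m_{VV} = \sum_{\mu \in \mathcal{S}_{VV}} a_{\mu}^{VV}\, \mu, \qquad m_{\mathbb{V}} = \sum_{\mu \in \mathcal{S}_{\mathbb{V}}} a_{\mu}^{\mathbb{V}}\, \mu,
\end{equation*}
where $a_\mu^{VV}$ and $a_\mu^{\mathbb{V}}$ count the occurrences of $\mu$ as a sink in the respective subgraph. Adding the two equations and observing that the sinks of $\Gamma$ are precisely the (multiset) union of the sinks of $\Gamma_{VV}$ and $\Gamma_{\mathbb{V}}$ yields the desired identity for $m$.

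The only nontrivial content is (i) the termination of the procedure, which is exactly the strict decrease of the crossing count guaranteed by Remark~\ref{localrelation}, and (ii) the fact that Eq~(\ref{identityresolvecrossing}) is a global identity (arcs are only required to match prescribed endpoints, regardless of further crossings they may create), so the substitution of $m_{VV}$ and $m_{\mathbb{V}}$ back into the sum is legitimate even when those children still contain crossings elsewhere in the diagram. Neither of these requires further argument here, since both have been addressed in the preceding discussion. The only mild obstacle is notational: one must be careful that $\mu$ can appear as a sink in both $\Gamma_{VV}$ and $\Gamma_{\mathbb{V}}$, in which case the multiplicities add — but this is precisely what the statement asserts.
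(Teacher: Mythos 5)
Your proof is correct and is essentially the paper's argument: the paper disposes of the lemma in one sentence (each vertex equals the sum of the targets of its two outgoing arrows), and your induction on the number of crossings is simply a careful unfolding of that same observation, relying—as the paper does—on the termination and globality points already established in the discussion preceding the lemma. No gap; nothing further is needed.
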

\begin{proof}
This is because each vertex is a sum of the two terms which are targets of its outgoing arrows.
\end{proof}

\begin{ex}
When $n=3$,
\begin{center}
\xymatrix{
& m=\begin{tikzpicture}[baseline={(0,-.3)}, scale = 0.4]
\draw[dotted] (-.25,0) -- (6.25,0) -- (6.25,-1) -- (-.25,-1) -- cycle;
\begin{footnotesize}
\node at (.5,.2) {$1$};
\node at (1.5,.2) {$2$};
\node at (2.5,.2) {$3$};
\node at (3.5,.2) {$4$};
\node at (4.5,.2) {$5$};
\node at (5.5,.2) {$6$};
\node at (2.1,-.45) {$\circ$};
\end{footnotesize}
\draw[thick] (.5,0) .. controls +(1,-1)  .. +(2,0);
\draw[thick] (1.5,0) .. controls +(1.5,-1)  .. +(3,0);
\draw[thick] (3.5,0) .. controls +(1,-1)  .. +(2,0);
\end{tikzpicture} \ar[d]^{VV} \ar[dr]^{\mathbb{V}} &&\\
& 
\begin{tikzpicture}[baseline={(0,-.3)}, scale = 0.4]
\draw[dotted] (-.25,0) -- (6.25,0) -- (6.25,-1) -- (-.25,-1) -- cycle;
\draw[thick] (.5,0) .. controls +(.5,-1)  .. +(1,0);
\draw[thick] (2.5,0) .. controls +(1,-1)  .. +(2,0);
\draw[thick] (3.5,0) .. controls +(1,-1)  .. +(2,0);
\end{tikzpicture}  \ar[dl]^{VV} \ar[d]^{\mathbb{V}}
 &
\begin{tikzpicture}[baseline={(0,-.3)}, scale = 0.4]
\draw[dotted] (-.25,0) -- (6.25,0) -- (6.25,-1) -- (-.25,-1) -- cycle;
\draw[thick] (.5,0) .. controls +(2,-1)  .. +(4,0);
\draw[thick] (1.5,0) .. controls +(.5,-.5)  .. +(1,0);
\draw[thick] (3.5,0) .. controls +(1,-1)  .. +(2,0);
\end{tikzpicture} \ar[d]^{VV}  \ar[dr]^{\mathbb{V}} &
  \\
w_1=\begin{tikzpicture}[baseline={(0,-.3)}, scale = 0.4]
\draw[dotted] (-.25,0) -- (6.25,0) -- (6.25,-1) -- (-.25,-1) -- cycle;
\draw[thick] (.5,0) .. controls +(.5,-1)  .. +(1,0);
\draw[thick] (2.5,0) .. controls +(.5,-1)  .. +(1,0);
\draw[thick] (4.5,0) .. controls +(.5,-1)  .. +(1,0);
\end{tikzpicture}   
   &
w_2=\begin{tikzpicture}[baseline={(0,-.3)}, scale = 0.4]
\draw[dotted] (-.25,0) -- (6.25,0) -- (6.25,-1) -- (-.25,-1) -- cycle;
\draw[thick] (.5,0) .. controls +(.5,-1)  .. +(1,0);
\draw[thick] (2.5,0) .. controls +(1.5,-1)  .. +(3,0);
\draw[thick] (3.5,0) .. controls +(.5,-.5)  .. +(1,0);
\end{tikzpicture}    
   &  
   w_3=\begin{tikzpicture}[baseline={(0,-.3)}, scale = 0.4]
\draw[dotted] (-.25,0) -- (6.25,0) -- (6.25,-1) -- (-.25,-1) -- cycle;
\draw[thick] (.5,0) .. controls +(1.5,-1)  .. +(3,0);
\draw[thick] (1.5,0) .. controls +(.5,-.5)  .. +(1,0);
\draw[thick] (4.5,0) .. controls +(.5,-.5)  .. +(1,0);
\end{tikzpicture} & 
w_4=\begin{tikzpicture}[baseline={(0,-.3)}, scale = 0.4]
\draw[dotted] (-.25,0) -- (6.25,0) -- (6.25,-1) -- (-.25,-1) -- cycle;
\draw[thick] (.5,0) .. controls +(2.5,-1.25)  .. +(5,0);
\draw[thick] (1.5,0) .. controls +(.5,-.5)  .. +(1,0);
\draw[thick] (3.5,0) .. controls +(.5,-.5)  .. +(1,0);
\end{tikzpicture}   
}
\begin{align*}
T=\begin{ytableau} 1 & 2 & 4  \\ 3 & 5& 6  \end{ytableau}\:, \hspace{.2 in} f(v_T)=m=w_1+w_2+w_3+w_4.
\end{align*}
\end{center}
In this example, the sum is multiplicity-free. We did not list an example with repeated terms due to the limitation of space.
\end{ex}

\subsection{Proof of Conjecture~\ref{mainconjecture}}
The main idea of proving Conjecture~\ref{mainconjecture} is to establish one crossing-resolving graph in which the desired web occurs as a sink. We need one lemma regarding the combinatorics of standard Young tableaux. 

\begin{lemma}\label{entries}
Let $S\to T$ be a directed edge in the tableau graph. Let $a_i$ be the $i$-th entry in the upper row of $S$, and $b_i$ the $i$-th entry in the upper row of $T$, then $a_i\geq b_i$ for all $1\leq i\leq n$.
\end{lemma}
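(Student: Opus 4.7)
The plan is to prove this by directly unpacking the definition of an edge in the tableau graph. By definition, $S \to T$ means $T = s_j.S$ for some simple transposition $s_j$, with the additional condition that $j$ lies in a row underneath $j+1$ in $S$. Since $\lambda = (n,n)$ has only two rows, this forces $j+1$ to sit in the top row of $S$ and $j$ in the bottom row of $S$. Applying $s_j$ then swaps these two entries in place, so in $T$ the entry $j$ appears in the top row (at the exact position occupied by $j+1$ in $S$) and $j+1$ appears in the bottom row.

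Let $k$ be the column index of $j+1$ in the top row of $S$, so $a_k = j+1$. I first want to verify that replacing $a_k = j+1$ by $j$ still yields a standard Young tableau (which is guaranteed since $T$ is a vertex of $\Gamma^{\mathcal{T}(n,n)}$, but it is worth checking in passing): the left neighbor $a_{k-1}$ satisfies $a_{k-1} < j+1$ and $a_{k-1} \neq j$ (because $j$ is in the bottom row of $S$), so $a_{k-1} \leq j-1 < j$; and the right neighbor satisfies $a_{k+1} > j+1 > j$. So the top row of $T$ is exactly the top row of $S$ with the single entry $j+1$ in position $k$ replaced by $j$.

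It follows that $b_i = a_i$ for every $i \neq k$, while $b_k = j = a_k - 1 < a_k$. In particular $a_i \geq b_i$ for all $1 \leq i \leq n$, which is the desired inequality. I do not anticipate a real obstacle here; the lemma is essentially a bookkeeping statement that records what a single covering move in the tableau graph does to the top row, and its main role appears to be to feed a monotonicity input into the inductive construction of a crossing-resolving graph that realizes a prescribed web as a sink.
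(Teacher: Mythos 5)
Your proof is correct and follows essentially the same route as the paper: both arguments simply note that the edge $S\to T$ swaps $j$ (bottom row) with $j+1$ (top row) in place, so the top row of $T$ agrees with that of $S$ except that one entry decreases by one, giving $a_i\geq b_i$. Your version just adds the (harmless) extra bookkeeping of locating the column $k$ and re-verifying standardness.
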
 
\begin{proof}
This follows from the fact that $T=s_i.S$ where $s_i$ takes an integer $i$ from the second row of $S$ and swaps it with $i+1$ from the first row of $S$. Therefore each entry in the first row gets replaced by a smaller entry or remains unchanged from $S$ to $T$. 
\end{proof}

\begin{corollary}\label{entries2}
If $S\leq T$ in the tableau graph, and $a_i$ and $b_i$ are defined as in Lemma~\ref{entries}, then $a_i\geq b_i$ for all $1\leq i\leq n$.
\end{corollary}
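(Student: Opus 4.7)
The plan is to reduce the corollary to an immediate iteration of Lemma~\ref{entries}, using the definition of the partial order $\leq$ on $\mathcal{T}(n,n)$ in terms of directed paths in $\Gamma^{\mathcal{T}(n,n)}$.

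First I would unpack the hypothesis $S \leq T$: by definition this means there is a directed path
\[
S = T^{(0)} \;\longrightarrow\; T^{(1)} \;\longrightarrow\; \cdots \;\longrightarrow\; T^{(k)} = T
\]
in the tableau graph, where each arrow $T^{(j)} \to T^{(j+1)}$ is a single edge of the form $T^{(j+1)} = s_{i_j}.T^{(j)}$ for some simple transposition $s_{i_j}$. Let $a_i^{(j)}$ denote the $i$-th entry in the top row of $T^{(j)}$, so that $a_i^{(0)} = a_i$ and $a_i^{(k)} = b_i$.

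Next I would induct on the length $k$ of this path. The base case $k=0$ gives $S=T$ and there is nothing to prove. For the inductive step, assume $a_i^{(0)} \geq a_i^{(j)}$ for all $1 \leq i \leq n$, and consider the edge $T^{(j)} \to T^{(j+1)}$. Applying Lemma~\ref{entries} to this single directed edge gives $a_i^{(j)} \geq a_i^{(j+1)}$ for all $i$, and composing the two inequalities yields $a_i^{(0)} \geq a_i^{(j+1)}$. Taking $j+1 = k$ gives $a_i \geq b_i$, which is the desired conclusion.

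There is really no substantial obstacle here, since the statement is a transitive closure of Lemma~\ref{entries}; the only mild subtlety is confirming that the partial order $\leq$ on $\mathcal{T}(n,n)$ is indeed the one generated by single directed edges (which is exactly the definition recalled in Section~\ref{S2.3}), and that the chosen path can be taken to consist of covering edges — but this is guaranteed by Lemma~\ref{rankedgraph}, which ensures the path decomposition is well-behaved.
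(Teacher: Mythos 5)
Your proof is correct and is exactly the argument the paper intends: the corollary is just Lemma~\ref{entries} iterated along a directed path realizing $S\leq T$, which is why the paper states it without a separate proof. (The appeal to Lemma~\ref{rankedgraph} at the end is unnecessary—any directed path consists of edges to which Lemma~\ref{entries} applies—but this does not affect correctness.)
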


\begin{conjecture} The opposite direction in Corollary~\ref{entries2} is also true:  if  $a_i$ and $b_i$ are the entries in the first row of $S$ and $T$, respectively, and $a_i\geq b_i$ for all $1\leq i\leq n$, then $S\leq T$.
\end{conjecture}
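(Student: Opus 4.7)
The plan is to induct on the nonnegative integer
\[
\sigma(S,T) := \sum_{i=1}^{n}(a_i - b_i).
\]
The base case $\sigma(S,T)=0$ forces $a_i=b_i$ for every $i$, and since a tableau of shape $(n,n)$ is determined by its first row, this gives $S=T$. For the inductive step, the strategy is to produce a single directed edge $S\to S'$ in $\Gamma^{\mathcal{T}(n,n)}$ whose target $S'$ still dominates $T$ componentwise on the top row and satisfies $\sigma(S',T)=\sigma(S,T)-1$; the induction hypothesis applied to $(S',T)$ then yields $S'\leq T$, and appending the edge $S\to S'$ gives $S\leq T$.

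The key step is the choice of edge. I would let $q$ be the \emph{leftmost} index with $a_q>b_q$, which exists because $\sigma(S,T)>0$, and note that $q\geq 2$ since $a_1 = b_1 = 1$ in any standard tableau of shape $(n,n)$. The combinatorial heart of the argument is the claim that $a_q - 1$ necessarily lies in row~$2$ of $S$: if instead it were in row~$1$, the strict increase along the top row of $S$ would force $a_{q-1} = a_q - 1$, while leftmost-ness of $q$ gives $a_{q-1} = b_{q-1}$, so that $b_{q-1} = a_q - 1 \geq b_q$, contradicting the strict increase of the top row of $T$. Setting $i = a_q - 1$ therefore produces a directed edge $S\to s_i\cdot S =: S'$ in $\Gamma^{\mathcal{T}(n,n)}$, and its effect on the top row is to replace $a_q$ by $a_q - 1$; componentwise dominance over $T$ persists because $a_q - 1 \geq b_q$, and $\sigma$ drops by exactly one.

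I do not anticipate a real obstacle; the whole argument hinges on one clever choice of the leftmost deficit column. The only subtlety is recognizing why \emph{leftmost} is the right choice: it is precisely this property that forces $a_{q-1} = b_{q-1}$, supplying the contradiction above. A minor verification, which I would include for rigor, is that $s_i\cdot S$ is automatically standard whenever $i$ and $i+1$ occupy different rows of $S$ in different columns, as is the situation here.
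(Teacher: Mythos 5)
Your argument is correct, but there is nothing in the paper to compare it against: the statement is explicitly left as an unproven conjecture there (only the forward direction, Lemma~\ref{entries} and Corollary~\ref{entries2}, is established), so your proposal goes beyond the paper rather than paralleling it. Checking the details: the base case works because a standard tableau of shape $(n,n)$ is determined by its top row; at the inductive step, the leftmost-deficit argument is sound --- if $a_q-1$ were in row $1$ of $S$, strict increase forces $a_{q-1}=a_q-1$, leftmostness forces $a_{q-1}=b_{q-1}$, and $a_q>b_q$ then gives $b_{q-1}\geq b_q$, contradicting standardness of $T$; hence $i=a_q-1$ sits in row $2$ of $S$ with $i+1=a_q$ in row $1$, which is exactly the condition for a directed edge $S\to s_i\cdot S$ in $\Gamma^{\mathcal{T}(n,n)}$. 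The ``minor verification'' you defer does hold and should be written out: when $i$ lies in row $2$ and $i+1$ in row $1$ of a standard two-row tableau they are automatically in different columns, and the swap preserves row- and column-strictness (the row-$1$ neighbor to the left of $a_q$ is $<a_q-1$ precisely because $a_q-1$ is not in row $1$, and the row-$2$ neighbor to the right of $i$ exceeds $i+1$ because $i+1$ is in row $1$), so $S'$ is again a vertex of the graph. Since the edge $S\to S'$ gives $S\leq S'$, the top row of $S'$ still dominates that of $T$, and $\sigma$ drops by one, induction and concatenation of paths yield $S\leq T$. Note also that your construction produces a saturated chain from $S$ to $T$ in which each step lowers exactly one top-row entry by $1$, which is a slightly stronger, constructive statement than the conjecture itself.
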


Let $S,T$ be two standard Young tableaux of shape $\lambda=(n,n)$, whose entries are $a_i$ and $b_i$ in the first row respectively $(1\leq i\leq n)$, from left to right. We have the following.
\begin{proposition} \label{existspath}
Assume $a_i\geq b_i$. Let $m=f(v_T)\in M^{\lambda}$ be the matching given in Lemma~\ref{firstisomorphism}, and $w=\phi(S)$ be the web defined in Section~\ref{S2.2}. Then there exists one crossing-resolving graph for $m$ in which $w$ occurs as a sink.
\end{proposition}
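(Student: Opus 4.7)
The plan is to induct on $\epsilon(m,w) := \sum_{i=1}^n(a_i-b_i)$, a nonnegative integer by hypothesis measuring how far the (sorted) left-endpoint sequence of $m$ lies from that of $w$ in componentwise order. In the base case $\epsilon=0$ we have $a_i=b_i$ for every $i$, and since a standard tableau of shape $(n,n)$ is determined by its top row, $S=T$, so the two matchings have identical left-endpoint sets. Here I would apply only $\mathbb{V}$-resolutions $(p,q),(p',q')\mapsto(p,q'),(p',q)$. A short check shows that each such resolution preserves the left-endpoint set and strictly lowers the total number of crossings: the count of crossings between the swapped pair and any fixed third arc is unchanged under the swap, while the crossing between the two resolved arcs themselves is eliminated. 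Since a noncrossing matching is uniquely determined by its left-endpoint set, the iteration terminates at $w$.

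For the inductive step $\epsilon>0$, I would seek a single VV-resolution $(p,q),(p',q')\mapsto(p,p'),(q,q')$ producing a matching $m'$ with $\epsilon(m',w)<\epsilon(m,w)$, and whose sorted left-endpoint sequence is still componentwise dominated by $(a_1,\ldots,a_n)$; the induction hypothesis applied to $m'$ then supplies a resolution subtree ending at $w$. The natural candidate is a pair of arcs of $m$ whose left endpoints are $b_i<b_j$ and whose right endpoints $c_i,c_j$ satisfy $b_j<c_i<c_j$ (so the two arcs cross); the VV-output then replaces $b_j$ in the left-endpoint sequence by the strictly larger value $c_i$. The existence of such a pair reduces to a counting argument on initial segments of $\{1,\ldots,2n\}$ using the hypothesis $a_i\geq b_i$, which forces, for a suitable choice of $j$ with $a_j>b_j$, the arc emanating from $b_j$ to be crossed from the left by some earlier arc.

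The main obstacle is that a naive choice of VV can land in a noncrossing matching $m'$ whose sorted left-endpoint sequence is still correctly dominated but which differs from $w$ and admits no further crossings to resolve, terminating the process prematurely. A concrete illustration with $n=3$, $T=T_0$, and $S$ of top row $(1,3,5)$ (so $m=(1,4)(2,5)(3,6)$ and $w=(1,2)(3,4)(5,6)$): the VV on $(1,4),(2,5)$ produces the noncrossing dead-end $(1,2)(3,6)(4,5)$, while the correct first move is the VV on $(1,4),(3,6)$, whose output retains crossings and permits the induction to continue. Specifying \emph{which} crossing to resolve, and verifying the choice never triggers such a dead end, is where I expect the argument to require the most care. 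Based on small examples, a workable prescription appears to be to take $j$ as large as possible with $a_j>b_j$ and then $i<j$ as small as possible such that $(b_i,c_i)$ and $(b_j,c_j)$ cross; rigorously proving that this selection rule always succeeds is the crux of the inductive step.
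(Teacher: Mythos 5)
Your base case is sound: a $\mathbb{V}$-resolution preserves the left-endpoint set and strictly lowers the total number of crossings (though your stated reason is slightly off --- the number of crossings with a fixed third arc is not always unchanged, it can drop by two; it just never increases), so the process terminates at the unique noncrossing matching with that left-endpoint set, namely $w$. The genuine gap is the inductive step. The statement you induct on must be applicable to the matching $m'$ produced by one VV-move, and $m'$ is in general no longer of the form $f(v_{T'})$ for a \emph{standard} tableau $T'$ (a VV-move can create nested arcs). So you are implicitly inducting on the stronger claim ``every matching whose sorted left-endpoint sequence is componentwise dominated by that of $w$ admits a resolution path to $w$'' --- and that claim is false: for $n=2$ take $m=(1,4)(2,3)$ and $w=(1,2)(3,4)$; the left endpoints $(1,2)$ of $m$ are dominated by $(1,3)$, yet $m$ is already crossingless, so the only sink is $m\neq w$. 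This is exactly the dead-end phenomenon your own $n=3$ example exhibits, and it shows that domination of left endpoints is not an adequate inductive invariant: the entire weight of the argument rests on the assertion that your selection rule (largest $j$ with $a_j>b_j$, then smallest crossing $i<j$) never produces such a trapped configuration, which you explicitly leave unproved. The auxiliary ``counting argument'' for the existence of a suitable crossing when $\epsilon>0$ is also only asserted; it is easy for the initial $m$ (which is non-nesting because $T$ is standard), but it must be proved for the intermediate matchings your induction actually visits, which need not be non-nesting.

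For comparison, the paper avoids the selection problem by inducting on $n$ rather than on the gap $\epsilon$: it isolates the arc of $m$ emanating from the upper-right entry of $T$ (this arc ends at $2n$ and is crossed by every arc whose right endpoint lies beyond it), resolves those crossings in a prescribed order --- VV-moves for the ones up to the upper-right entry $b$ of $S$, then $\mathbb{V}$-moves for the rest --- so that this single block of moves manufactures the cup $(b,b+1)$, which is the rightmost cup of $w$; it then deletes that cup from both diagrams and applies the induction hypothesis on $2n-2$ points, where the componentwise domination of first rows persists. If you want to salvage your $\epsilon$-induction you would need to formulate and preserve an invariant finer than left-endpoint domination (one that rules out the nested dead ends), which is in effect what the paper's prescribed cup-by-cup order of moves accomplishes implicitly.
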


\begin{proof}
We prove by induction on $n$.  Let $a$ be the upper right entry of $T$. Since $a$ is rightmost among all left endpoints, the vertices  $a+1,\dots,2n-1$ must all be right endpoints of other arcs, and the arcs incident to $a+1,\dots,2n-1$ must intersect the arc $(a,2n)$ exactly once. If these arcs have further intersections among themselves, one can move the intersections outside the arc $(a,2n)$. On the other hand, let $b$ be the upper right entry of $S$. Then $(b,b+1)$ must be a cup in $S$, because there are no left endpoint to the right of $b$. These observations allow us to draw both $T$, $S$, and their corresponding diagrams $m$ and $w$, as follows.  Here we only display the rightmost region of the diagrams, and use $b'=b+1$, $b''=b+2$, $\underline{b}=b-1$. The notation is similar for $a$. The dotted boxes are filled with consecutive integers:

\begin{align} 
&T= \begin{ytableau}
*(red) & *(red)  &*(red) & *(red) & *(red) & *(red) & *(red)& *(red) & a\\
*(red) & *(red)  &*(red) & *(red) & a' & \cdot  & \cdot & \cdot & 2n\\
\end{ytableau}\:, \hspace{.2 in} 
S= \begin{ytableau}
*(blue) & *(blue)  &*(blue) & *(blue) & *(blue) & *(blue) & *(blue) & *(blue) & b\\
*(blue) & *(blue) &*(blue) & *(blue) & b' & \cdot  & \cdot & \cdot & 2n\\
\end{ytableau}\: ,
\\
&m= \begin{tikzpicture}[baseline={(0,-.3)}, scale = 0.8]
\draw[dotted] (-.25,0) -- (5,0) -- (5,-1.7) -- (-.25,-1.7) -- cycle;
\begin{footnotesize}
\node at (.8,.2) {$a$};
\node at (1.5,-.2) {\dots};
\node at (1.9,.2) {$b$};
\node at (2.3,.2) {$b'$};
\node at (2.7,-.2) {\dots};
\node at (4.1,.2) {$2n$};
\node at (1.35,-.55) {$x$};
\node at (1.65,-.45) {$\circ$};
\end{footnotesize}
\draw[thick,color=red] (.8,0) .. controls +(2,-1) .. +(3.2,0);
\draw[thick] (.5,-.6) .. controls +(.4,0) .. +(.6,.6);
\draw[thick] (.5,-.9) .. controls +(.8,0) .. +(1.4,.9);
\draw[thick] (.5,-1.2) .. controls +(.9,0) .. +(1.7,1.2);
\draw[thick] (.5,-1.5) .. controls +(1.5,0) .. +(3,1.5);
\end{tikzpicture}\:, \hspace{.3 in}
w= \begin{tikzpicture}[baseline={(0,-.3)}, scale = 0.8]
\draw[dotted] (-.25,0) -- (5,0) -- (5,-1.7) -- (-.25,-1.7) -- cycle;
\begin{footnotesize}
\node at (.9,.2) {$a$};
\node at (1.1,-.2) {\dots};
\node at (1.9,.2) {$b$};
\node at (2.3,.2) {$b'$};
\node at (3,-.2) {\dots};
\node at (4.1,.2) {$2n$};
\end{footnotesize}
\draw[thick,color=red] (1.9,0) .. controls +(.15,-.5) .. +(.3,0);
\draw[thick] (.5,-1.2) .. controls +(1.1,0) .. +(2.1,1.2);
\draw[thick] (.5,-1.5) .. controls +(1.7,0) .. +(3.4,1.5);
\end{tikzpicture}  \: .  \label{mw}
\end{align}
% \node at (2,-.85) {$y$};
% \node at (1.9,-.55) {$\circ$};
We resolve the crossings $m$ intersecting the arc $(a,2n)$, in the following fashion: first resolve the series of crossings to the left of $x$, in the counterclockwise order, each using the "VV" move. The result is the following intermediate diagram $m_1$ below. Next, resolve the crossings to the right of $y$, in the clockwise order, each using the "$\mathbb{V}$" move. The result is the diagram $m'$ below. 
\begin{align*}
m_1= \begin{tikzpicture}[baseline={(0,-.3)}, scale = 0.8]
\draw[dotted] (-.25,0) -- (5,0) -- (5,-1.7) -- (-.25,-1.7) -- cycle;
\begin{footnotesize}
\node at (.8,.2) {$a$};
\node at (1.1,-.2) {\dots};
\node at (1.9,.2) {$b$};
\node at (2.3,.2) {$b'$};
\node at (2.7,-.2) {\dots};
\node at (4.1,.2) {$2n$};
\node at (2.1,-.55) {$y$};
\node at (2.1,-.2) {$\circ$};
\end{footnotesize}
\draw[thick,color=red] (1.9,0) .. controls +(1.1,-1) .. +(2.2,0);
\draw[thick] (.5,-.6) .. controls +(.15,0) .. +(.3,.6);
\draw[thick] (.5,-.9) .. controls +(.5,0) .. +(1.1,.9);
\draw[thick] (.5,-1.2) .. controls +(.9,0) .. +(1.7,1.2);
\draw[thick] (.5,-1.5) .. controls +(1.5,0) .. +(3,1.5);
\end{tikzpicture}\:\, \hspace{.2 in} m'= \begin{tikzpicture}[baseline={(0,-.3)}, scale = 0.8]
\draw[dotted] (-.25,0) -- (5,0) -- (5,-1.7) -- (-.25,-1.7) -- cycle;
\begin{footnotesize}
\node at (.9,.2) {$a$};
\node at (1.1,-.2) {\dots};
\node at (1.9,.2) {$b$};
\node at (2.3,.2) {$b'$};
\node at (3,-.2) {\dots};
\node at (4.1,.2) {$2n$};
\end{footnotesize}
\draw[thick,color=red] (1.9,0) .. controls +(.15,-.5) .. +(.3,0);
\draw[thick] (.5,-.6) .. controls +(.15,0) .. +(.3,.6);
\draw[thick] (.5,-.9) .. controls +(.5,0) .. +(1.1,.9);
\draw[thick] (.5,-1.2) .. controls +(1.1,0) .. +(2.1,1.2);
\draw[thick] (.5,-1.5) .. controls +(1.7,0) .. +(3.4,1.5);
\end{tikzpicture}
\end{align*}

We also give the tableau $T'$ corresponding to $m'$ via the map $f$ in Remark~\ref{fgeneral} (notice $T'$ is no longer standard).
\begin{align*}
T'= \begin{ytableau}
*(red) & *(red)  &*(red) & *(red) & *(red) & *(red) &*(red) & *(red)& *(red)& *(red)& *(red)& *(red) & b\\
*(red) & *(red)  &*(red) & *(red) & a & \cdot  &\cdot & \underline{b} & b''& \cdot  &\cdot & 2n &b'\\
\end{ytableau}
\end{align*}

Now we use the induction hypothesis and argue that there is a method of resolving crossings in $m_2$, in order to obtain $m$. First, remove the red cup $(b,b+1)$ in $w$ and $m'$. Since this cup does not interfere with  other parts of the diagram, we only need to focus on the subdiagrams in $w$ and $m'$ after removal. Call these $w|_{n-1}$ and $m'|_{n-1}$, respectively. Let their corresponding tableaux be $S|_{n-1}$ and $T'|_{n-1}$, respectively. Here $w|_{n-1}$ and $S|_{n-1}$ are identified via $\phi$, whereas $m'|_{n-1}$ and $T'|_{n-1}$ are identified via $f$. Use $N=2n-2$, we have 
\begin{align*}
&w|_{n-1}= \begin{tikzpicture}[baseline={(0,-.3)}, scale = 0.8]
\draw[dotted] (-.25,0) -- (5,0) -- (5,-1.7) -- (-.25,-1.7) -- cycle;
\begin{footnotesize}
\node at (.9,.2) {$a$};
\node at (1.1,-.2) {\dots};
\node at (1.9,.2) {$b$};
\node at (2.3,.2) {$b'$};
\node at (3,-.2) {\dots};
\node at (4.1,.2) {$2n-2$};
\end{footnotesize}
\draw[thick] (.5,-.9) .. controls +(.6,0) .. +(1.3,.9);
\draw[thick] (.5,-1.2) .. controls +(.8,0) .. +(1.7,1.2);
\draw[thick] (.5,-1.5) .. controls +(1.7,0) .. +(3.4,1.5);
\end{tikzpicture}\:, \hspace{.3 in}
m'|_{n-1}= \begin{tikzpicture}[baseline={(0,-.3)}, scale = 0.8]
\draw[dotted] (-.25,0) -- (5,0) -- (5,-1.7) -- (-.25,-1.7) -- cycle;
\begin{footnotesize}
\node at (.9,.2) {$a$};
\node at (1.1,-.2) {\dots};
\node at (1.9,.2) {$b$};
\node at (2.3,.2) {$b'$};
\node at (3,-.2) {\dots};
\node at (4.1,.2) {$2n$};
\end{footnotesize}
\draw[thick] (.5,-.6) .. controls +(.15,0) .. +(.3,.6);
\draw[thick] (.5,-.9) .. controls +(.7,0) .. +(1.4,.9);
\draw[thick] (.5,-1.2) .. controls +(.9,0) .. +(1.8,1.2);
\draw[thick] (.5,-1.5) .. controls +(1.7,0) .. +(3.4,1.5);
\end{tikzpicture}
\\
&S|_{n-1}=\begin{ytableau}
*(blue) & *(blue)  &*(blue) & *(blue) & *(blue) & *(blue) & *(blue) & *(blue) \\
*(blue) & *(blue) &*(blue) & *(blue) & b & \cdot & \cdot & N\\
\end{ytableau}\:, \hspace{.5 in}
T'|_{n-1}  \begin{ytableau}
*(red) & *(red)  &*(red) & *(red) & *(red)  & *(red)& *(red)& *(red)\\
*(red) & *(red)  &*(red) & *(red) & a & \cdot  &\cdot  & N \\
\end{ytableau}
\end{align*}
One key fact note is that the blue (and red, respectively) region in the tableau remains the same. Therefore $S|_{n-1}$ and $T'|_{n-1}$ satisfy the requirement in the induction hypothesis, and there is a path from $m'|_{n-1}$ to $w|_{n-1}$ by resolving the crossings using suitable move. The path from $w$ to $m$ is established by using the sequence of moves from $m$ to $m'$, then from $m'$ to $w$.
\end{proof}

Conjecture~\ref{mainconjecture} (\cite[Conjecture~5.8]{RT19}) is a direct consequence of the above result.
\begin{theorem}\label{mainth}
The entries in the transitioning matrix satisfy $a_{ST}>0$ if and only if $S\leq T$.
\end{theorem}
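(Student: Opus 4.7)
The plan is straightforward once one sees how the pieces fit together. Theorem~\ref{unitriangularity} already gives $a_{ST}=0$ whenever $S \not\leq T$, which handles the ``only if'' direction immediately. For the harder ``if'' direction, I will combine Corollary~\ref{entries2}, Proposition~\ref{existspath}, and Lemma~\ref{formula} to produce $w_S$ as a sink in some crossing-resolving graph with multiplicity at least one, and hence extract a strictly positive coefficient.

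More concretely, first I would translate $S \leq T$ into the numerical condition $a_i \geq b_i$ on first-row entries via Corollary~\ref{entries2}, so that the hypothesis of Proposition~\ref{existspath} is met. Next I would invoke that proposition to obtain a crossing-resolving graph $\Gamma$ for $m = f(v_T)$ in which $w_S = \phi(S)$ appears as a sink. Then Lemma~\ref{formula} writes $m = \sum_\mu a_\mu \mu$ in $M^\lambda$ with each $a_\mu \in \mathbb{Z}_{\geq 0}$ counting occurrences as sinks in $\Gamma$, so that the coefficient on $w_S$ is at least $1$.

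Finally, I would reconcile this $M^\lambda$-coefficient with the coefficient $a_{ST}$ from the web basis expansion. Under the identification $M^\lambda \simeq W^\lambda$ sending noncrossing matchings to themselves, the composite $S^\lambda \xrightarrow{f} M^\lambda \simeq W^\lambda$ sends $v_{T_0}$ to $w_0$ by Lemma~\ref{firstisomorphism}, matching $\rho(v_{T_0})=w_0$. Since both maps are $S_{2n}$-equivariant isomorphisms of the irreducible module $S^\lambda$, Schur's lemma forces them to coincide. Thus the coefficient of $w_S$ in the noncrossing expansion of $f(v_T)$ equals $a_{ST}$, and the previous step shows it is positive.

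The substantial combinatorial work has already been carried out in Proposition~\ref{existspath}, whose inductive construction of an appropriate resolution sequence is the heart of the argument. At the present stage the main subtlety is the observation that Eq~(\ref{identityresolvecrossing}) expresses a crossing as a sum of two terms both with coefficient $+1$, so no cancellation is possible and the existence of even a single sink equal to $w_S$ translates directly to strict positivity of $a_{ST}$. Rhoades' nonnegativity theorem is consistent with but weaker than what is needed here; the new content is precisely the existence of at least one contributing sink in some resolution graph.
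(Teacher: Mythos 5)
Your proposal follows essentially the same route as the paper: the ``only if'' direction from Theorem~\ref{unitriangularity}, and the ``if'' direction by combining Corollary~\ref{entries2}, Proposition~\ref{existspath}, and Lemma~\ref{formula} to exhibit $\phi(S)$ as a sink with positive multiplicity. Your extra paragraph identifying $f$ (composed with $M^\lambda\simeq W^\lambda$) with $\rho$ via Schur's lemma and the common value on $v_{T_0}$ is a correct and slightly more careful spelling-out of a step the paper leaves implicit, but it is not a different argument.
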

\begin{proof}
The ``only if'' direction is indicated by Theorem~\ref{unitriangularity}. To prove the ``if'' direction, notice $S\leq T$ implies that $a_i>b_i$ for $1\leq i\leq n$ by Corollary~\ref{entries2}, if $a_i$, $b_i$ are entries in the first rows of $S$ and $T$, respectively. Proposition~\ref{existspath} then implies that there is a crossing-resolving graph $\Gamma$ associated to $f(v_T)$, in which $\phi(S)$ occurs as a sink. Lemma~\ref{formula} then implies that the path count $a_{ST}$ is positive.
\end{proof}
\begin{remark}
Note we are not choosing a $\Gamma$ uniformly to deduce that all $S$ occurs as a sink in $\Gamma$ for $S\leq T$. Rather, for each $S$, the path in Proposition~\ref{existspath} is constructed according to $S$ (and $T$), resulting in a $\Gamma$ containing this path of resolving crossings.
\end{remark}

\subsection{An example}

We give one example to further illustrate the crossing-resolving algorithm in Proposition~\ref{existspath}.
\begin{ex}\label{hugeex}
Let $n=5$,
\begin{align*}
\ytableausetup{smalltableaux,notabloids}
S= \begin{ytableau}
1 & 3 &  4 & 6 & 9\\
2  &5& 7& 8& 10
\end{ytableau}, \hspace{.5 in} 
T=s_7s_8s_5s_2S=
 \begin{ytableau}
1 & 2 &  4 & 5 & 7\\
3  &6& 8& 9& 10
\end{ytableau}. 
\end{align*}
The corresponding web and matching are
\begin{align*}
w=\phi(S)=\begin{tikzpicture}[baseline={(0,-.3)}, scale = 0.8]
\draw[dotted] (-.25,0) -- (10.25,0) -- (10.25,-1.5) -- (-.25,-1.5) -- cycle;
\begin{footnotesize}
\node at (.5,.2) {$1$};
\node at (1.5,.2) {$2$};
\node at (2.5,.2) {$3$};
\node at (3.5,.2) {$4$};
\node at (4.5,.2) {$5$};
\node at (5.5,.2) {$6$};
\node at (6.5,.2) {$7$};
\node at (7.5,.2) {$8$};
\node at (8.5,.2) {$9$};
\node at (9.5,.2) {$10$};
\end{footnotesize}
\draw[thick] (.5,0) .. controls +(.5,-.5)  .. +(1,0);
\draw[thick] (3.5,0) .. controls +(.5,-.5)  .. +(1,0);
\draw[thick] (5.5,0) .. controls +(.5,-.5)  .. +(1,0);
\draw[thick] (8.5,0) .. controls +(.5,-.5)  .. +(1,0);
\draw[thick] (2.5,0) .. controls +(2.5,-1.5)  .. +(5,0);
\end{tikzpicture}, \\
m=f(v_T)=
\begin{tikzpicture}[baseline={(0,-.3)}, scale = 0.8]
\draw[dotted] (-.25,0) -- (10.25,0) -- (10.25,-1.5) -- (-.25,-1.5) -- cycle;
\begin{footnotesize}
\node at (.5,.2) {$1$};
\node at (1.5,.2) {$2$};
\node at (2.5,.2) {$3$};
\node at (3.5,.2) {$4$};
\node at (4.5,.2) {$5$};
\node at (5.5,.2) {$6$};
\node at (6.5,.2) {$7$};
\node at (7.5,.2) {$8$};
\node at (8.5,.2) {$9$};
\node at (9.5,.2) {$10$};
\end{footnotesize}
\draw[thick] (.5,0) .. controls +(1,-1)  .. +(2,0);
\draw[thick] (1.5,0) .. controls +(2,-1)  .. +(4,0);
\draw[thick] (3.5,0) .. controls +(2,-1)  .. +(4,0);
\draw[thick] (4.5,0) .. controls +(2,-1)  .. +(4,0);
\draw[thick] (6.5,0) .. controls +(1.5,-1)  .. +(3,0);
\end{tikzpicture}. 
\end{align*}
The path of crossing-resolving moves from $m$ to $w$ is given by the following. Here we use colors so the reader can keep track of the strands more easily:
\begin{align*}
m= &\begin{tikzpicture}[baseline={(0,-.3)}, scale = 0.8]
\draw[dotted] (-.25,0) -- (10.25,0) -- (10.25,-1.5) -- (-.25,-1.5) -- cycle;
\begin{footnotesize}
\node at (.5,.2) {$1$};
\node at (1.5,.2) {$2$};
\node at (2.5,.2) {$3$};
\node at (3.5,.2) {$4$};
\node at (4.5,.2) {$5$};
\node at (5.5,.2) {$6$};
\node at (6.5,.2) {$7$};
\node at (7.5,.2) {$8$};
\node at (8.5,.2) {$9$};
\node at (9.5,.2) {$10$};
\node at(6.95,-.28) {$\circ$};
\node at(7.38,-.57) {$\circ$};
\end{footnotesize}
\draw[thick,color=blue] (.5,0) .. controls +(1,-1)  .. +(2,0);
\draw[thick,color=pink] (1.5,0) .. controls +(2,-1)  .. +(4,0);
\draw[thick,color=red] (3.5,0) .. controls +(2,-1)  .. +(4,0);
\draw[thick,color=yellow] (4.5,0) .. controls +(2,-1)  .. +(4,0);
\draw[thick,color=green] (6.5,0) .. controls +(1.5,-1)  .. +(3,0);
\end{tikzpicture}. 
\end{align*}
To move the green cup to (9,10), we swap 7 and 8 (Type VV), then swap 8 and 9 (Type VV):
\begin{align*}
&\begin{tikzpicture}[baseline={(0,-.3)}, scale = 0.8]
\draw[dotted] (-.25,0) -- (10.25,0) -- (10.25,-1.5) -- (-.25,-1.5) -- cycle;
\begin{footnotesize}
\node at (.5,.2) {$1$};
\node at (1.5,.2) {$2$};
\node at (2.5,.2) {$3$};
\node at (3.5,.2) {$4$};
\node at (4.5,.2) {$5$};
\node at (5.5,.2) {$6$};
\node at (6.5,.2) {$7$};
\node at (7.5,.2) {$8$};
\node at (8.5,.2) {$9$};
\node at (9.5,.2) {$10$};
\node at(4.95,-.3) {$\circ$};
\node at(5.5,-.64) {$\circ$};
\end{footnotesize}
\draw[thick,color=blue] (.5,0) .. controls +(1,-1)  .. +(2,0);
\draw[thick,color=pink] (1.5,0) .. controls +(2,-1)  .. +(4,0);
\draw[thick,color=red] (3.5,0) .. controls +(1.5,-1)  .. +(3,0);
\draw[thick,color=yellow] (4.5,0) .. controls +(1.5,-1)  .. +(3,0);
\draw[thick,color=green] (8.5,0) .. controls +(.5,-1)  .. +(1,0);
\end{tikzpicture}. 
\end{align*}
Now we move the yellow cup to (6,7). To do this we swap 5 and 6 (Type VV), then swap 7 and 8 (Type $\mathbb{V}$):
\begin{align*}
&\begin{tikzpicture}[baseline={(0,-.3)}, scale = 0.8]
\draw[dotted] (-.25,0) -- (10.25,0) -- (10.25,-1.5) -- (-.25,-1.5) -- cycle;
\begin{footnotesize}
\node at (.5,.2) {$1$};
\node at (1.5,.2) {$2$};
\node at (2.5,.2) {$3$};
\node at (3.5,.2) {$4$};
\node at (4.5,.2) {$5$};
\node at (5.5,.2) {$6$};
\node at (6.5,.2) {$7$};
\node at (7.5,.2) {$8$};
\node at (8.5,.2) {$9$};
\node at (9.5,.2) {$10$};
\node at(4,-.35) {$\circ$};
\end{footnotesize}
\draw[thick,color=blue] (.5,0) .. controls +(1,-1)  .. +(2,0);
\draw[thick,color=pink] (1.5,0) .. controls +(1.5,-1)  .. +(3,0);
\draw[thick,color=red] (3.5,0) .. controls +(2,-1.5)  .. +(4,0);
\draw[thick,color=yellow] (5.5,0) .. controls +(.5,-.5)  .. +(1,0);
\draw[thick,color=green] (8.5,0) .. controls +(.5,-1)  .. +(1,0);
\end{tikzpicture}. 
\end{align*}
Now we move the red cup to (4,5). This requires one Type $\mathbb{V}$ move of swapping 5 and 8:
\begin{align*}
\begin{tikzpicture}[baseline={(0,-.3)}, scale = 0.8]
\draw[dotted] (-.25,0) -- (10.25,0) -- (10.25,-1.5) -- (-.25,-1.5) -- cycle;
\begin{footnotesize}
\node at (.5,.2) {$1$};
\node at (1.5,.2) {$2$};
\node at (2.5,.2) {$3$};
\node at (3.5,.2) {$4$};
\node at (4.5,.2) {$5$};
\node at (5.5,.2) {$6$};
\node at (6.5,.2) {$7$};
\node at (7.5,.2) {$8$};
\node at (8.5,.2) {$9$};
\node at (9.5,.2) {$10$};
\node at(2.15,-.35) {$\circ$};
\end{footnotesize}
\draw[thick,color=blue] (.5,0) .. controls +(1,-1)  .. +(2,0);
\draw[thick,color=red] (3.5,0) .. controls +(.5,-.5)  .. +(1,0);
\draw[thick,color=yellow] (5.5,0) .. controls +(.5,-.5)  .. +(1,0);
\draw[thick,color=green] (8.5,0) .. controls +(.5,-.5)  .. +(1,0);
\draw[thick,color=pink] (1.5,0) .. controls +(3,-1.5)  .. +(6,0);
\end{tikzpicture}. 
\end{align*}
Lastly we move the pink cup to (3,8), via a Type VV move of swapping 2 and 3:
\begin{align*}
\begin{tikzpicture}[baseline={(0,-.3)}, scale = 0.8]
\draw[dotted] (-.25,0) -- (10.25,0) -- (10.25,-1.5) -- (-.25,-1.5) -- cycle;
\begin{footnotesize}
\node at (.5,.2) {$1$};
\node at (1.5,.2) {$2$};
\node at (2.5,.2) {$3$};
\node at (3.5,.2) {$4$};
\node at (4.5,.2) {$5$};
\node at (5.5,.2) {$6$};
\node at (6.5,.2) {$7$};
\node at (7.5,.2) {$8$};
\node at (8.5,.2) {$9$};
\node at (9.5,.2) {$10$};
\end{footnotesize}
\draw[thick,color=blue] (.5,0) .. controls +(.5,-1)  .. +(1,0);
\draw[thick,color=red] (3.5,0) .. controls +(.5,-.5)  .. +(1,0);
\draw[thick,color=yellow] (5.5,0) .. controls +(.5,-.5)  .. +(1,0);
\draw[thick,color=green] (8.5,0) .. controls +(.5,-.5)  .. +(1,0);
\draw[thick,color=pink] (2.5,0) .. controls +(2.5,-1.5)  .. +(5,0);
\end{tikzpicture}. 
\end{align*}
This way we have obtained the web $w$.

\section{The inverse matrix}
This section is dedicated to investigating the inverse matrix for $(a_{ST})$ in Theorem~\ref{unitriangularity}, which may serve as a first step in understanding the properties of this matrix. In certain settings, the entries of this matrix are known to be related to coefficients in cohomology groups \cite{SW19}, in the study of Springer fibers of the Steinberg variety.

Let $\psi: W^{\lambda} \to S^{\lambda}$ be the inverse of $\rho$ introduced in Section~\ref{S2.3}. Recall the Young tableau $T_0$ also introduced in Section~\ref{S2.3}, and let $w_0=\phi(T_0)$. In particular, $w_0$ is the web with $n$ cups side by side.
\end{ex}

\begin{lemma}\label{KLontableau}
If $w=rw_0$ where $r=s_{i_1}s_{i_2}\cdots s_{i_t}$ is a reduced word obtained via a path from $w_0$ to $w$, then
\begin{align*}
\psi(w)=(s_{i_1}-1)(s_{i_2}-1)\cdots (s_{i_t}-1)v_{T_0}. 
\end{align*}
\end{lemma}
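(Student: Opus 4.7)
The plan is to prove this by induction on the path length $t$, reducing the statement to a single compatibility between the web action and the edges of the tableau graph pulled back via $\phi$. The base case $t=0$ is immediate: $w = w_0$ and $\psi(w_0) = v_{T_0}$ by the defining property of $\rho$ stated in Section~\ref{S2.3}, while the right-hand side is an empty product acting on $v_{T_0}$.

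For the inductive step, pull back the path from $w_0$ to $w$ via $\phi$ to obtain a sequence
\[
T_0 = T^{(0)} \xrightarrow{s_{j_1}} T^{(1)} \xrightarrow{s_{j_2}} \cdots \xrightarrow{s_{j_t}} T^{(t)}
\]
in the tableau graph with $T^{(k)} = s_{j_k}.T^{(k-1)}$, and set $w_{(k)} := \phi(T^{(k)})$. The central claim is that each edge realizes the operator $s_{j_k}-1$ on the web side, i.e.
\[
w_{(k)} = (s_{j_k}-1).w_{(k-1)}.
\]
To see this, observe that $s_{j_k}$ being an edge label forces $j_k$ to lie in a row beneath $j_k+1$ in $T^{(k-1)}$, so by the construction of $\phi$ the vertex $j_k$ is a right endpoint and $j_k+1$ is a left endpoint of distinct arcs in $w_{(k-1)}$. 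Equation~(\ref{webaction}) then gives $s_{j_k}.w_{(k-1)} = w_{(k-1)} + w'$, where $w'$ is obtained by replacing arcs $(a,j_k)$ and $(j_k+1,b)$ with $(a,b)$ and $(j_k, j_k+1)$. A direct check confirms $w' = w_{(k)}$: its left-endpoint set matches the top row of $T^{(k)}$ (since $j_k+1$ got swapped with $j_k$), and it remains noncrossing because any arc of $w_{(k-1)}$ with an endpoint inside $(a,j_k)$ or $(j_k+1,b)$ must have both endpoints inside that interval by noncrossingness, so cannot cross $(a,b)$. Bijectivity of $\phi$ then forces $w' = w_{(k)}$.

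Iterating the claim gives $w = (s_{j_t}-1)(s_{j_{t-1}}-1)\cdots (s_{j_1}-1).w_0$. Since $T^{(t)} = s_{j_t} s_{j_{t-1}}\cdots s_{j_1}.T_0$, the reduced expression $r = s_{i_1}\cdots s_{i_t}$ with $\phi^{-1}(w) = r.T_0$ satisfies $s_{i_k} = s_{j_{t-k+1}}$, so $(s_{i_1}-1)\cdots(s_{i_t}-1) = (s_{j_t}-1)\cdots(s_{j_1}-1)$. Applying the $S_{2n}$-module isomorphism $\psi$ to both sides yields
\[
\psi(w) = (s_{i_1}-1)(s_{i_2}-1)\cdots (s_{i_t}-1)\,v_{T_0},
\]
as claimed. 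The main obstacle is the central claim $w' = w_{(k)}$, i.e. the verification that a single web action realizes the swap of $j_k$ and $j_k+1$ on the tableau side on the nose; beyond that, the argument is a bookkeeping exercise in reversing path order against product order in the reduced word.
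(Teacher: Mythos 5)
Your proof is correct and takes essentially the same route as the paper: both rest on the edge-wise identity $\phi(s_i.T)=(s_i-1)\phi(T)$ for a directed edge $T\xrightarrow{s_i}s_i.T$ in the tableau graph, iterated along the path and transported through the module isomorphism $\psi$ with $\psi(w_0)=v_{T_0}$. The only difference is that you verify this identity in detail (noncrossingness of $w'$ and agreement of its left endpoints with the new top row, plus the bookkeeping of path order versus word order), where the paper simply asserts it follows from the web action \eqref{webaction}.
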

\begin{proof}
This follows from the fact that whenever $R \overset{s_i} \longrightarrow U$ is a directed edge in the tableau graph, then $U=(t_i-1)R$. The claim then follows from the action of $S_{2n}$ on the cup diagrams in (\ref{webaction}).
\end{proof}

The polytabloids satisfy certain relations, known as the Garnir relations, as given in \cite{Fu97,Gre80,Sag01}. The following is a consequence of the theorem in the special case when the relevant partition is $(n,n)$.
\begin{theorem} \label{Garnir}
1) If $i$ and $j$ are in the same column of $T$, then
\begin{align}
(i \hspace{.1 in} j). v_T=-v_T.  \label{R1}
\end{align}

2) If $T$ has shape $(n,n)$ and $S$ is obtained from $T$ by interchanging two given columns, then 
\begin{align}
v_T=v_S.   \label{R2}
\end{align}

3)  If $T$ has shape $(n,n)$, then
\begin{align}
\ytableausetup{smalltableaux,notabloids}
v_{\begin{ytableau}
*(white) & &a& c & &\\
 & & b&  x &  &
\end{ytableau}}
&=
v_{\begin{ytableau}
*(white) && c & a && \\
&& b & x &&
\end{ytableau}} +
v_{\begin{ytableau}
*(white)&& a & b &&\\
&& c & x &&
\end{ytableau}}. \label{R3}
\end{align}
Here, blank boxes represent an arbitrary number of boxes, whose entries remain the same throughout the equation.
\end{theorem}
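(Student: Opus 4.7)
The plan is to derive all three identities directly from the classical formula
\[v_T=\sum_{\pi\in C_T}\mathrm{sgn}(\pi)\,\pi.\{T\},\]
where $C_T$ denotes the column stabilizer of $T$ and $\{T\}$ is the row tabloid. Two immediate consequences will be used throughout: $\sigma.v_T=v_{\sigma.T}$ for every $\sigma\in S_{2n}$, and $\pi.v_T=\mathrm{sgn}(\pi)\,v_T$ for every $\pi\in C_T$. Both follow by reindexing the defining sum.

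Relation (\ref{R1}) is then the special case of the second identity with $\pi=(i\ j)\in C_T$. For (\ref{R2}), I let $a,b$ be the entries of one column of $T$ (top and bottom) and $c,x$ those of the other, so that interchanging these two columns amounts to applying $\pi=(a\ c)(b\ x)$ to $T$. Each factor of $\pi$ swaps entries within a single row of $T$, hence $\pi.\{T\}=\{T\}$; and conjugation by $\pi$ permutes the column-generators $(a\ b),(c\ x)$ of $C_T$ among themselves, so $C_{\pi.T}=\pi C_T\pi^{-1}=C_T$. Substituting into the definition yields $v_S=v_{\pi.T}=v_T$.

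For (\ref{R3}) I will carry out the standard Garnir argument specialized to $A=\{a,b\}$ (the left column of the indicated pair) and $B=\{c\}$ (the top entry of the right one). The key input is that the full antisymmetrizer $g=\sum_{\sigma\in S_{\{a,b,c\}}}\mathrm{sgn}(\sigma)\,\sigma$ annihilates every $(n,n)$-tabloid: in any such tabloid at least two of the three entries $a,b,c$ share a row by pigeonhole, and if $(p\ q)$ is that shared pair, the involution $\sigma\mapsto\sigma(p\ q)$ pairs terms whose tabloid images coincide but whose signs differ. Hence $g.v_T=0$. Using the left-coset decomposition $S_{\{a,b,c\}}=\{e,(a\ c),(b\ c)\}\cdot\langle(a\ b)\rangle$ with alternating signs, $g$ factors as $(1-(a\ c)-(b\ c))(1-(a\ b))$. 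Applying the inner factor $(1-(a\ b))$ first and invoking (\ref{R1}) (valid because $a,b$ are column-mates in $T$) gives $(1-(a\ b)).v_T=2v_T$, so
\[0=g.v_T=2\bigl(v_T-(a\ c).v_T-(b\ c).v_T\bigr).\]
Identifying $(a\ c).v_T=v_{T_1}$ and $(b\ c).v_T=v_{T_2}$ via $\sigma.v_T=v_{\sigma.T}$ delivers (\ref{R3}).

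The single technical point requiring care is to keep the factorization of $g$ in the right order, with $(1-(a\ b))$ acting first, so that (\ref{R1}) can be used to eliminate that factor; reversing the order would break the chain. Once this ordering is fixed, every remaining step is a routine group-algebra manipulation, and each of the three relations drops out.
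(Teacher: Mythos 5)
Your argument is correct. Note, though, that the paper itself does not prove this theorem: it simply quotes the Garnir relations from the standard references (Fulton, Green, Sagan) specialized to $\lambda=(n,n)$, so there is no in-paper proof to compare against; what you have written is essentially the textbook argument those references give, carried out in full. Your reduction works: $\sigma.v_T=v_{\sigma.T}$ and $\pi.v_T=\mathrm{sgn}(\pi)v_T$ for $\pi\in C_T$ give (\ref{R1}) at once; for (\ref{R2}) the permutation $(a\ c)(b\ x)$ lies in the row stabilizer and normalizes $C_T$, so $v_{\pi.T}=v_T$; and for (\ref{R3}) the antisymmetrizer $g$ over $S_{\{a,b,c\}}$ kills every two-row tabloid by the pigeonhole/involution pairing, while the left-coset factorization $g=(1-(a\,c)-(b\,c))(1-(a\,b))$ with the factor $(1-(a\,b))$ acting first turns $g.v_T=0$ into $2\bigl(v_T-(a\,c).v_T-(b\,c).v_T\bigr)=0$, which is exactly the claimed identity; your remark about the order of the factors is the right technical point, since the opposite factorization would not let you invoke (\ref{R1}). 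Two small caveats worth recording: the final step divides by $2$, so the argument needs characteristic different from $2$ (harmless here, as the paper works over $\mathbb{C}$, though the Garnir relations themselves hold integrally via the usual transversal form of the Garnir element); and your proof tacitly uses the classical polytabloid construction $v_T=\sum_{\pi\in C_T}\mathrm{sgn}(\pi)\,\pi.\{T\}$ for arbitrary (not necessarily standard) fillings $T$, which the paper never spells out but clearly intends, since the statement and its later use in Proposition~\ref{explicitpreimage} involve non-standard tableaux.
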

%=============================================

%Recall that the partial order on $\mathcal{T}(n,n)$ is ranked, where the rank function is given by $\ell(w)$ for $T=w.T_0$. In \cite{RT19}, another computation of this rank is given equivalently as the nesting number for the associated web $w=\phi(T)$. In particular, an arc $(a,b)$ is called \emph{nested inside} another arc $(a',b')$, if $a'<a<b<b'$. The \emph{nesting number} of a web $w$ is the number of pairs $(A_1,A_2)$, where $A_1$, $A_2$ are both arcs in $w$, and $A_1$ is nested inside $A_2$. An example of the computation 

\begin{proposition}\label{explicitpreimage}
We have $\psi(w)=v_T$, where $T$ is a tableau such that both entries in each column of $T$ are connected by an arc in $w$, and the smaller entry is on top. Moreover, $v_T$ is well-defined by this condition. 
\end{proposition}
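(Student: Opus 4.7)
The plan is to split the proposition into its two claims and prove them in order.

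For well-definedness of $v_T$, the only freedom in the prescription is the ordering of the columns, since the convention "smaller entry on top" fixes each column as an ordered pair. Iterating the Garnir relation~(\ref{R2}) -- swapping two columns of a shape-$(n,n)$ tableau preserves the polytabloid -- shows that any two column orderings yield the same element $v_T \in S^\lambda$.

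For the identification $\psi(w) = v_T$, I will induct along a path $w_0 \to w_1 \to \cdots \to w_t = w$ in the web graph. By Lemma~\ref{KLontableau}, such a path exists, and along each directed edge $w_{j-1} \to w_j$ we have $w_j = (s_i - 1) w_{j-1}$ for some simple transposition $s_i$, via the "$s_i \cdot w_{j-1} = w_{j-1} + w_j$" scenario of~(\ref{webaction}); at such an edge, $w_{j-1}$ contains arcs $(a, i)$ and $(i+1, b)$ while $w_j$ contains arcs $(a, b)$ and $(i, i+1)$ in their place, all other arcs unchanged. The base case $w = w_0$ is immediate from $\psi(w_0) = v_{T_0}$ and the fact that the cups $\{2k-1, 2k\}$ of $w_0$ are exactly the columns of $T_0$. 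For the inductive step, write $w = (s_i - 1) w'$ with $w' = w_{t-1}$, so that by $S_{2n}$-equivariance and the induction hypothesis $\psi(w') = v_{T'}$,
\[
\psi(w) = (s_i - 1)\psi(w') = s_i v_{T'} - v_{T'},
\]
where $T'$ is the tableau prescribed for $w'$; by construction $T'$ contains the columns $\{a, i\}$ and $\{i+1, b\}$, while $T$ differs from $T'$ only in replacing these by $\{a, b\}$ and $\{i, i+1\}$.

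The remaining task is a single Garnir manipulation. In $T'$ the entry $i$ lies in the bottom row (column $\{a,i\}$) and $i+1$ lies in the top row (column $\{i+1,b\}$), so the action rule gives $s_i v_{T'} = v_{s_i T'}$, where $s_i T'$ has the two affected columns as $\{a, i+1\}$ (with $a$ on top) and $\{i, b\}$ (with $i$ on top). Plugging the substitution $a \mapsto a,\ c \mapsto i,\ b \mapsto i+1,\ x \mapsto b$ into the Garnir relation~(\ref{R3}) identifies its left-hand side with $v_{s_i T'}$ and its two right-hand summands with $v_T$ (columns $\{a, b\},\{i, i+1\}$) and $v_{T'}$ (columns $\{a, i\},\{i+1, b\}$), respectively. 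Thus $v_{s_i T'} = v_T + v_{T'}$, and $\psi(w) = v_T$, closing the induction. The main obstacle I foresee is purely the bookkeeping of this Garnir substitution -- matching the two affected columns of $s_i T'$ with the left-hand side of~(\ref{R3}) and confirming that the two right-hand tableaux land on $T$ and $T'$ with the correct column shapes; all unaffected columns carry through automatically since (\ref{R3}) is local in the two displayed columns.
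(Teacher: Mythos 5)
Your proof is correct and takes essentially the same route as the paper: well-definedness via the column-interchange relation \eqref{R2}, induction along an edge $w'\to w$ of the web graph using $\psi(w)=(s_i-1)v_{T'}$, and a single Garnir manipulation on the two affected columns. The only cosmetic difference is that you apply \eqref{R3} directly to $v_{s_iT'}$ to obtain $v_{s_iT'}=v_T+v_{T'}$, whereas the paper first flips a column using \eqref{R1} and then reads \eqref{R3} in the reverse direction; the two computations are equivalent.
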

\begin{proof}
By \eqref{R2}, one can always rearrange the columns such that $v_T$ remains the same, hence $v_T$ is well-defined. We induct on the partial order on the set of webs. Suppose the statement  holds for all webs of order less than or equal to $w'$. Given $w$ which is immediately grater than $w'$, i.e. there is an edge $w'\to w$ via the simple transposition $s_i$. We have $w=(s_i-1)w'$ and 
\begin{align*}
\psi(w)=(s_i-1)\psi(w')=(s_i-1)v_{T'},
\end{align*}
where $T'$ is a tableau obtained from $w'$ via the condition stated in the lemma. 

Since $i$ is a right endpoint in $w'$ and $i+1$ a left endpoint in $w'$, let $x$ be the left endpoint of the arc incident to $i$, and $y$ be the right endpoint of the arc incident to $i+1$. That is to say,
\begin{align*}
w'=\begin{tikzpicture}[baseline={(0,-.3)}, scale = 0.8]
\draw[dotted] (-.25,0) -- (4.25,0) -- (4.25,-.9) -- (-.25,-.9) -- cycle;
\begin{footnotesize}
\node at (.5,.2) {$x$};
\node at (1.5,.2) {$i$};
\node at (2.5,.2) {$i+1$};
\node at (3.5,.2) {$y$};
\end{footnotesize}
\draw[thick] (.5,0) .. controls +(.5,-1)  .. +(1,0);
\draw[thick] (2.5,0) .. controls +(.5,-1)  .. +(1,0);
\end{tikzpicture},  
\hspace{.3 in}
w=\begin{tikzpicture}[baseline={(0,-.3)}, scale = 0.8]
\draw[dotted] (-.25,0) -- (4.25,0) -- (4.25,-.9) -- (-.25,-.9) -- cycle;
\begin{footnotesize}
\node at (.5,.2) {$x$};
\node at (1.5,.2) {$i$};
\node at (2.5,.2) {$i+1$};
\node at (3.5,.2) {$y$};
\end{footnotesize}
\draw[thick] (.5,0) .. controls +(1.5,-1)  .. +(3,0);
\draw[thick] (1.5,0) .. controls +(.5,-.5)  .. +(1,0);
\end{tikzpicture}. 
\end{align*}
Let $j=i+1$, then
\begin{align*}
T'= \begin{ytableau}
*(white)&&x&&&& j &&\\
&& i &&&& y &&
\end{ytableau}. 
\end{align*}
Here blank boxes represent an arbitrary number of boxes. Hence
\begin{align*}
\psi(w)=&(s_i-1)v_{T'}\\
=&v_{ \begin{ytableau}
*(white)&&x&&&& i &&\\
&& j &&&& y &&
\end{ytableau}}-
v_{ \begin{ytableau}
*(white)&&x&&&& j &&\\
&& i &&&& y &&
\end{ytableau}}\\
=&v_{ \begin{ytableau}
*(white)&&x&&&& i &&\\
&& j &&&& y &&
\end{ytableau}}+
v_{ \begin{ytableau}
*(white)&&i&&&& j &&\\
&& x &&&& y &&
\end{ytableau}}=
v_{ \begin{ytableau}
*(white)&&i&&&& x &&\\
&& j &&&& y &&
\end{ytableau}}
\end{align*}
is in the form desired. The other columns in $T$ and $T'$ coincide because the other arcs in $w'$ and $w$ coincide. 
\end{proof}

\begin{remark}
Because the standard polytabloids span $S^{\lambda}$, using the Garnir relations in Theorem~\ref{Garnir}, one is guaranteed to expand $v_T$ as a linear combination of standard polytabloids. Hence Proposition~\ref{explicitpreimage} gives an indirect algorithm of computing the coefficients in $\psi(w)$ for any cup diagram $w$.
\end{remark}

\begin{ex}
Let $w=\phi(S)$ be the same as in Example~\ref{hugeex}. Then $\psi(w)=v_R$, where
\begin{align*}
R=\begin{ytableau}
1 & 3 & 4 & 6 & 9 \\
2 & 8 & 5 & 7 & 10
\end{ytableau}. 
\end{align*}
Note that $R$ is no longer standard.
\end{ex}

\bibliography{litlist} \label{references}

\providecommand{\bysame}{\leavevmode\hbox to3em{\hrulefill}\thinspace}
\providecommand{\MR}{\relax\ifhmode\unskip\space\fi MR }
% \MRhref is called by the amsart/book/proc definition of \MR.
\providecommand{\MRhref}[2]{%
  \href{http://www.ams.org/mathscinet-getitem?mr=#1}{#2}
}
\providecommand{\href}[2]{#2}
\begin{thebibliography}{EGNO15}

\bibitem[BDK20]{BDK20}
Gordon~C. Brown, Nicholas~J. Davidson, and Jonathan~R. Kujawa, \emph{Quantum
  webs of type q}, 2020.

\bibitem[Bro19]{Br19}
Gordon~C. Brown, \emph{Webs for permutation supermodules of type {Q}}, Comm.
  Algebra \textbf{47} (2019), no.~11, 4763--4790. \MR{3991050}

\bibitem[CKM14]{CKM14}
Sabin Cautis, Joel Kamnitzer, and Scott Morrison, \emph{Webs and quantum skew
  {H}owe duality}, Math. Ann. \textbf{360} (2014), no.~1-2, 351--390.

\bibitem[EGNO15]{EGNO15}
Pavel Etingof, Shlomo Gelaki, Dmitri Nikshych, and Victor Ostrik, \emph{Tensor
  categories}, Mathematical Surveys and Monographs, vol. 205, American
  Mathematical Society, Providence, RI, 2015. \MR{3242743}

\bibitem[Ful97]{Fu97}
W.~Fulton, \emph{Young tableaux}, London Mathematical Society Student Texts,
  vol.~35, Cambridge University Press, Cambridge, 1997, With applications to
  representation theory and geometry.

\bibitem[Fun03]{Fun03}
F.~Y.C. Fung, \emph{On the topology of components of some {S}pringer fibers and
  their relation to {K}azhdan-{L}usztig theory}, Adv. Math. \textbf{178}
  (2003), no.~2, 244--276.

\bibitem[Gre80]{Gre80}
J.~A. Green, \emph{Polynomial representations of {${\text GL}_{n}$}}, Lecture
  Notes in Mathematics, vol. 830, Springer-Verlag, Berlin-New York, 1980.

\bibitem[ILW19]{Im-Lai-Wilbert}
M.S. Im, C.-J. Lai, and A.~Wilbert, \emph{{"Irreducible components of two-row
  Springer filers for all classical types"}}, in-preparation, 2019.

\bibitem[KK99]{KK99}
M.~Khovanov and G.~Kuperberg, \emph{Web bases for {$\mathfrak{sl}(3)$} are not
  dual canonical}, Pacific J. Math. \textbf{188} (1999), no.~1, 129--153.

\bibitem[Kup96]{Kup96}
G.~Kuperberg, \emph{Spiders for rank 2 {L}ie algebras}, Comm. Math. Phys.
  \textbf{180} (1996), no.~1, 109--151.

\bibitem[PPR09]{PPR09}
T.~K. Petersen, P.~Pylyavskyy, and B.~Rhoades, \emph{Promotion and cyclic
  sieving via webs}, J. Algebraic Combin. \textbf{30} (2009), no.~1, 19--41.

\bibitem[{Rho}18]{Rho18}
B.~{Rhoades}, \emph{{The polytabloid basis expands positively into the web
  basis}}, arXiv e-prints (2018), arXiv:1808.00445.

\bibitem[RT11]{RT11}
H.~M. Russell and J.~Tymoczko, \emph{Springer representations on the {K}hovanov
  {S}pringer varieties}, Math. Proc. Cambridge Philos. Soc. \textbf{151}
  (2011), 59--81.

\bibitem[RT19]{RT19}
H.~M. Russell and J.~S. Tymoczko, \emph{The transition matrix between the
  {S}pecht and web bases is unipotent with additional vanishing entries}, Int.
  Math. Res. Not. IMRN (2019), no.~5, 1479--1502.

\bibitem[RTW32]{rumer1932valenztheorie}
G~Rumer, E~Teller, and H~Weyl, \emph{Eine f{\"u}r die valenztheorie geeignete
  basis der bin{\"a}ren vektorinvarianten, nachrichten von der ges. der wiss},
  Zu G{\"o}ttingen. Math.-Phys. Klasse (1932), 498--504.

\bibitem[Sag01]{Sag01}
B.~E. Sagan, \emph{The symmetric group}, second ed., Graduate Texts in
  Mathematics, vol. 203, Springer-Verlag, New York, 2001, Representations,
  combinatorial algorithms, and symmetric functions.

\bibitem[Spa76]{Spa76}
N.~Spaltenstein, \emph{The fixed point set of a unipotent transformation on the
  flag manifold}, Nederl. Akad. Wetensch. Proc. Ser. A {\bf 79}=Indag. Math.
  \textbf{38} (1976), no.~5, 452--456.

\bibitem[Sta99]{St99}
Richard~P. Stanley, \emph{Enumerative combinatorics. {V}ol. 2}, Cambridge
  Studies in Advanced Mathematics, vol.~62, Cambridge University Press,
  Cambridge, 1999, With a foreword by Gian-Carlo Rota and appendix 1 by Sergey
  Fomin. \MR{1676282}

\bibitem[Ste88]{Ste88}
R.~Steinberg, \emph{An occurrence of the {R}obinson-{S}chensted
  correspondence}, J. Algebra \textbf{113} (1988), no.~2, 523--528.

\bibitem[SW12]{SW12}
C.~Stroppel and B.~Webster, \emph{2-block {S}pringer fibers: convolution
  algebras and coherent sheaves}, Comment. Math. Helv. \textbf{87} (2012),
  477--520.

\bibitem[SW19]{SW19}
C.~Stroppel and A.~Wilbert, \emph{Two-block {S}pringer fibers of types {C} and
  {D}: a diagrammatic approach to {S}pringer theory}, Math. Z. \textbf{292}
  (2019), no.~3-4, 1387--1430.

\bibitem[TL71]{TL71}
H.~N.~V. Temperley and E.~H. Lieb, \emph{Relations between the ``percolation''
  and ``colouring'' problem and other graph-theoretical problems associated
  with regular planar lattices: some exact results for the ``percolation''
  problem}, Proc. Roy. Soc. London Ser. A \textbf{322} (1971), no.~1549,
  251--280. \MR{498284}

\bibitem[TVW15]{TVW15}
D.~Tubbenhauer, P.~Vaz, and P.~Wedrich, \emph{Super {$q$}-{H}owe duality and
  web categories}, ar{X}iv:1504.05069, 2015.

\bibitem[Var79]{Var79}
J.A. Vargas, \emph{Fixed points under the action of unipotent elements of
  {SL}$_n$ in the flag variety}, Bol. Soc. Mat. Mexicana \textbf{24} (1979),
  no.~1, 1--14.

\end{thebibliography}
\bibliographystyle{amsalpha}

\end{document}